\documentclass{amsart}

\DeclareMathSymbol{\twoheadrightarrow}  {\mathrel}{AMSa}{"10}

\def\Q{{\mathbb Q}}
\def\Z{{\mathbb Z}}
                             \def\NN{{\mathbb N}}
\def\C{{\mathbb C}}

\def\RR{{\mathbb R}}
\def\F{{\mathbb F}}
\def\P{{\mathbb P}}
             \def\PP{{\mathfrak P}}

             \def\Fr{\mathrm{Fr}}

\def\f{{\tilde F}}
                     \def\f0{{\mathfrak f}}

                                            \def\Norm{\mathrm{Norm}}

\def\A8{{\mathbf A}_8}

\def\RR{{\mathfrak R}}

\def\Gal{\mathrm{Gal}}

                              \def\ord{\mathrm{ord}}

\def\End{\mathrm{End}}
\def\Aut{\mathrm{Aut}}

\def\I{\mathrm{Id}}

              \def\L{{\mathcal L}}

\def\ST{{\mathbf S}}

\def\fchar{\mathrm{char}}

                                                \def\ssL{\mathrm{ss}}

\def\A{\mathbf{A}}

\def\B{{\mathfrak B}}

\def\dim{\mathrm{dim}}
                           \def\rk{\mathrm{rk}}
                           \def\length{\mathrm{length}}
                           \def\Sl{\mathrm{Slp}}
\def\Oc{{\mathcal O}}

\newtheorem{thm}{Theorem}[section]
\newtheorem{lem}[thm]{Lemma}
\newtheorem{cor}[thm]{Corollary}

\theoremstyle{definition}
\newtheorem{defn}[thm]{Definition}
\newtheorem{ex}[thm]{Example}
\newtheorem{rem}[thm]{Remark}

\newtheorem{rems}[thm]{Remarks}
        
\hyphenation{equi-var-i-ant}

\title[Eigenvalues of Frobenius Endomorphisms of Abelian varieties]{Eigenvalues of Frobenius Endomorphisms of Abelian varieties of low dimension}

\author[Yuri\ G.\ Zarhin]{Yuri\ G.\ Zarhin}
\address{Department of Mathematics, Pennsylvania State University,
University Park, PA 16802, USA}
\address{Department of Mathematics, The Weizmann Institute of Science,
 P.O.B. 26,  Rehovot 7610001, Israel}

\thanks{This work was partially supported by the Simons Foundation (grant \#246625 to Yuri Zarkhin).}
\email{zarhin\char`\@math.psu.edu}
\thanks{}
\begin{document}
\begin{abstract}
In this paper we discuss {\sl nontrivial} multiplicative relations among  eigenvalues of  Frobenius endomorphisms of abelian varieties   over finite fields. (The trivial relations are provided by the ``Riemann Hypothesis" that was proven by A. Weil.) We classify all abelian varieties over finite fields of dimension $\le 3$ that admit the nontrivial  relations.
\end{abstract}
\maketitle

\section{Introduction}

There is a lasting interest  in the study of multiplicative relations between eigenvalues of the
 Frobenius endomorphism $\Fr_X$ of an  abelian varietiy $X$ over a finite field $k=\F_q$ of characteristic $p$ (where $q$ is a power of $p$).
A  {\sl nontrivial} multiplicative relation between the eigenvalues  gives rise to an {\sl exotic}  Tate class on a certain self-product of $X$ \cite{ZarhinK3,LenstraZarhin,ZarhinEssen}. (Here  exotic means that this class cannot be presented as a linear combination of products of divisor classes.) These relations are important in the study of $\ell$-adic representations attached to abelian varieties over global fields \cite{ZarhinIzv79,ZarhinInv79}. In particular, they play a crucial role in Serre's theory of {\sl Frobenius tori} \cite{SerreRibet}.
 On the other hand, the absence of nontrivial multiplicative relations between the eigenvalues of most jacobians over finite fields  is viewed as an analogue of conjectures of $\Q$-linear independence of ordinates of zeros of L-functions over number fields \cite{K}. The absence of these relations
  for the jacobian of a given  curve $\mathcal{C}$ over  $\F_q$  was used in \cite{Sh} in order to study an asymptotic behavior of the {\sl normalized error term} in Weil's formula
for the number
 of points of $\mathcal{C}$ in degree $n$ extensions $\F_{q^n}$ of the ground field.

In this paper we study  the nontrivial multiplicative relations for abelian varieties of small dimension.  
 Our main tool, as in \cite{ZarhinIzv79,ZarhinInv79,ZarhinK3,LenstraZarhin,ZarhinEssen}, is the multiplicative group $\Gamma(X,k)$ generated by the set  $R_X$ of eigenvalues of $\Fr_X$.   Recall that $\alpha \mapsto q/\alpha$ is a permutation of $R_X$ and notice that
$$q^{-1}\left(\frac{q}{\alpha}\right)^2=\left(q^{-1}\alpha^2\right)^{-1}.$$
This implies that if $e:R_X \to \Z$ is an integer-valued function such that
$$e(\alpha)=e(q/\alpha) \ \forall
\alpha \in R_X$$ then
$$\prod_{\alpha\in R_X}\left(q^{-1}\alpha^2\right)^{e(\alpha)}=1.$$

 Assuming that $k$ is {\sl sufficiently large} with respect to $X$, i.e., $\Gamma(X,k)$ does {\sl not} contain {\sl nontrivial} roots of unity, we say that $X$ is {\sl neat} (see \cite[Sect. 3]{ZarhinEssen} and Sect. \ref{neat} below) if it enjoys the following property.

{\sl If $e:R_X \to \Z$ is an integer-valued function such that
$$\prod_{\alpha\in R_X}\left(q^{-1}\alpha^2\right)^{e(\alpha)}=1$$ then $e(\alpha)=e(q/\alpha)$ for all}
$\alpha \in R_X$.

Notice that $X$ is neat if and only if every Tate class on each  self-product of $X$  can be presented as a linear combination of products of divisor classes  \cite{ZarhinK3,LenstraZarhin,ZarhinEssen}. 
 (In particular, the Tate conjecture holds true for all self-products of $X$.)  
An analogy with the Hodge conjecture  for complex abelian varieties \cite{MZ}
 suggests that up to dimension 3  all abelian varieties over finite fields should be neat.  
However, it turns out that there are non-neat abelian threefolds (see below).

Our main result is the following statement.

\begin{thm}
\label{main}
Suppose that $1 \le \dim(X) \le 3$ and $k$ is sufficiently large with respect to $X$.  Then $X$ is not neat if and only if it enjoys all of the following three properties.

\begin{itemize}
\item[(i)]
$X$ is abslolutely simple, all endomorphisms of $X$ are defined over $k$ and its endomorphism algebra $\End^0(X)$ is a sextic CM-field that is generated by $\Fr_X$.
\item[(ii)]
 $\End^0(X)$   contains an imaginary quadratic subfield $B$ that enjoys the following property. If
$$\Norm: \End^0(X) \to B$$
is the norm map corresponding to the cubic field extension $\End^0(X)/B$ then
$$\Norm\left(q^{-1} \Fr_X^2\right)=1.$$
\item[(iii)] $X$ is almost ordinary, i.e. the set of slopes of its Newton polygon is $\{0,1/2,1\}$ and $\length(1/2)=2$.
\end{itemize}
\end{thm}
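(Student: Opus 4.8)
The plan is to reformulate neatness as a multiplicative‑independence statement, reduce to the case of a simple threefold, and then analyse that case through the Galois structure of a sextic CM field together with its Newton polygon. Work throughout with $\Gamma:=\Gamma(X,k)=\langle R_X\rangle\subset\bar\Q^{\times}$, which is torsion‑free (as $k$ is large) and $\Gal(\bar\Q/\Q)$‑stable (as $\Gal$ permutes $R_X$). Writing $\iota(\alpha)=q/\alpha$, one has $q^{-1}\alpha^{2}=\alpha/\iota(\alpha)$ and $(q^{-1}\alpha^{2})(q^{-1}\iota(\alpha)^{2})=1$, so a function $e$ with $\prod_{\alpha}(q^{-1}\alpha^{2})^{e(\alpha)}=1$ is $\iota$‑symmetric precisely when it lies in an explicit sublattice determined by the $\iota$‑orbits; hence $X$ is neat iff the elements $q^{-1}\alpha^{2}$, with $\alpha$ ranging over representatives of the $\iota$‑orbits of size $2$ in $R_X$, are multiplicatively independent in $\Gamma$. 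If $X$ is isogenous to a product of pairwise non‑isogenous simple varieties, the subgroups of $\Gamma$ generated by the $q^{-1}\alpha^{2}$ attached to distinct factors are multiplicatively independent of one another (the $R_{X_i}$ are pairwise disjoint by Honda–Tate, and on the level of $p$‑adic valuations the relevant $S$‑units from different factors cannot cancel — the independence of the Frobenius tori of varieties with no common isogeny component); so $X$ is neat iff each simple factor is. Since a non‑simple $X$ of dimension $\le 3$ has all simple factors of dimension $\le 2$ (and $X=Y^{n}$ has $R_X=R_Y$), it remains to prove that a simple abelian variety of dimension $\le 2$ is neat and that a non‑neat simple threefold satisfies (i)--(iii).

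For simple $X$, Tate gives $D:=\End^{0}(X)$ a division algebra with centre $E=\Q(\Fr_X)$ and $2\dim X=[E:\Q]\sqrt{[D:E]}$. If $D\ne E$ then, in dimension $\le 3$, this forces $[E:\Q]\le 2$ and $R_X$ to have at most two elements, in which case the single relevant $q^{-1}\alpha^{2}$, being in the torsion‑free group $\Gamma$, is either $1$ or of infinite order, so $X$ is neat; and if $D=E$ with $\dim X\le 2$ the same bookkeeping (for $\dim 2$ using that the CM type of a simple abelian surface is primitive, hence not induced from a quadratic subfield) shows $X$ is neat. Therefore a non‑neat $X$ of dimension $3$ is simple with $\End^{0}(X)=E$ a sextic CM field and $E=\Q(\Fr_X)$; since $k$ is large, $\End^{0}(X_{\bar k})=E$ is a field of degree $2\dim X$, so $X_{\bar k}$ is simple: this is (i). Set $w:=q^{-1}\Fr_X^{2}=\Fr_X/\overline{\Fr_X}\in E^{\times}$, with $\overline{\phantom{x}}$ the CM involution; then $w\bar w=1$, the six eigenvalues are $\tau(\Fr_X)$ over the embeddings $\tau$ of $E$, $\iota$ is complex conjugation of embeddings, and $X$ is neat iff $\tau_{1}(w),\tau_{2}(w),\tau_{3}(w)$ are multiplicatively independent for representatives $\tau_{1},\tau_{2},\tau_{3}$ of the three conjugate pairs of embeddings.

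The heart of the proof concerns the group $W\subset L^{\times}$ generated by the $\Pi$‑conjugates of $w$, where $L$ is the Galois closure of $E$, $\Pi=\Gal(L/\Q)$, $H=\Gal(L/E)$, and $c\in\Pi$ is the central complex conjugation (so $c\notin H$). Because $w$ is fixed by $H$ and $w\bar w=1$, $W_{\Q}:=W\otimes\Q$ is a cyclic quotient of the $3$‑dimensional $\Q[\Pi]$‑module $M:=\Q[\Pi/H]\big/(1+c)\Q[\Pi/H]$, on which $c$ acts by $-1$; and since every element of $W$ has all archimedean absolute values $1$, the $p$‑adic valuations embed $W$ into a lattice $\bigoplus_{\mathfrak P\mid p}\Z$, so $\dim_{\Q}W_{\Q}$ equals the rank of the $\Pi$‑module spanned by the valuation vector of $w$. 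Now $X$ is non‑neat iff $\dim_{\Q}W_{\Q}\le 2$; the value $1$ is impossible, since then $w$ would lie in the imaginary quadratic subfield cut out by the corresponding quadratic character, forcing $\Fr_X^{2}$, hence $E$, to have degree $\le 4$; so $\dim_{\Q}W_{\Q}=2$, which forces $M$ to be $\Q$‑reducible. Identifying $M$ with $\mathrm{Ind}_{H'}^{\Pi}\chi$, where $H'=H\langle c\rangle$ and $\chi$ is the order‑two character of $H'$ with kernel $H$, Frobenius reciprocity shows $M$ is $\Q$‑reducible iff $\chi$ extends to a rational character of $\Pi$ iff $E$ contains an imaginary quadratic subfield $B$ — the existence clause of (ii). The kernel of $M\twoheadrightarrow W_{\Q}$ is necessarily a rational line $\tilde\chi$ corresponding to such a $B$ (the complementary $2$‑dimensional submodule would give $\dim_{\Q}W_{\Q}\le 1$), and applying the $\tilde\chi$‑idempotent to $w$ yields the relation $\prod_{\sigma\in\Pi}\sigma(w)^{\tilde\chi(\sigma)}=1$, which — using that $\Gamma$ is $\Gal$‑stable and torsion‑free — collapses first to $\Norm_{E/B}(w)/\overline{\Norm_{E/B}(w)}=1$ and then to $\Norm_{E/B}(q^{-1}\Fr_X^{2})=1$: this is (ii). Finally, given (i) and (ii), the identities $\Norm_{E/B}(w)=1$ and $w\bar w=1$ impose a linear relation on the $p$‑adic valuations of $w$, hence on the slopes of $\Fr_X$ (when $p$ splits completely in $E$ it says the slopes summed over a transversal of the conjugate pairs equal $3/2$); checking this against the short list of Newton polygons of abelian threefolds eliminates the ordinary, the $p$‑rank‑one and the $\{1/3,2/3\}$ polygons, while the supersingular one is excluded outright ($w$ would be a unit of absolute value $1$, hence a root of unity, hence $1$, contradicting $\Fr_X$ generating $E$); only the almost‑ordinary polygon survives, which is (iii). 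The converse is immediate: by (i) the $\tau(\Fr_X)$ are distinct, $\iota$ interchanges the three embeddings of $E$ extending a fixed embedding $\beta$ of $B$ with the three extending $\bar\beta$, and (ii) reads $\tau_{1}(w)\tau_{2}(w)\tau_{3}(w)=\beta(\Norm_{E/B}(w))=1$, a non‑$\iota$‑symmetric relation.

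The step I expect to be the main obstacle consists of the two arithmetic inputs in the previous paragraph: recognising $\prod_{\sigma}\sigma(w)^{\tilde\chi(\sigma)}$ as a twist of $\Norm_{E/B}(w)$, so that an a priori arbitrary nontrivial relation becomes the clean norm identity; and carrying out the Newton‑polygon computation uniformly over all decomposition types of $p$ in the sextic CM field. A secondary, more technical point is the independence of Frobenius tori needed for the reduction to the simple case.
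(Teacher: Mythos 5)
Your treatment of the simple threefold case is essentially sound (and takes a partly different route from the paper: a representation-theoretic derivation of the norm relation, and a direct slope computation from $\Norm_{E/B}(q^{-1}\Fr_X^2)=1$ in place of the paper's appeal to Theorem \ref{sloperank} and the K3-type rank result). The genuine gap is the reduction to simple factors. Your claim that for pairwise non-isogenous simple factors the groups generated by the elements $q^{-1}\alpha^2$ of the distinct factors are multiplicatively independent --- justified by ``on the level of $p$-adic valuations the relevant $S$-units from different factors cannot cancel'' --- is false, and it is false precisely in the situation the theorem must exclude. If $\End^0(Y)$ and $\End^0(Z)$ both contain a common imaginary quadratic field $B$ in which $p$ splits, then $\Norm_{E_Y/B}(q^{-1}\Fr_Y^2)$ and $\Norm_{E_Z/B}(q^{-1}\Fr_Z^2)$ both lie in the group $U_T^1$ of $T$-units of archimedean absolute value $1$, which has rank $1$ (Lemma \ref{elementary}); hence, if neither is a root of unity, suitable powers of them coincide, giving a nontrivial, non-symmetric multiplicative relation mixing eigenvalues of the two factors. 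This is exactly the content of Theorem \ref{essen212} and Corollary \ref{essen211}, and it shows that a product of neat factors need not be neat (this mechanism is what produces non-neat ordinary fourfolds, cf.\ Section \ref{dim4}). So ``$X$ is neat iff each simple factor is'' cannot be asserted on the grounds you give.

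Concretely, in dimension $3$ the case your argument does not touch is $X$ isogenous to $Y\times Z$ with $Y$ a simple abelian surface whose endomorphism algebra is a quartic CM-field containing an imaginary quadratic subfield and $Z$ an elliptic curve with CM by that subfield; such an $X$ would be non-neat while failing (i). The theorem survives only because no such $Y$ exists over a finite field (with all endomorphisms defined over $k$): this is Theorem \ref{surfaceIM} and Corollary \ref{nonexist}, proved by lifting $Y$ to characteristic zero via Honda--Tate/CM-lifting and then using the Shimura--Lang classification, which shows every CM-type of a quartic CM-field containing an imaginary quadratic subfield has nontrivial automorphisms, so the lift is isogenous to the square of a CM elliptic curve, contradicting absolute simplicity. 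Nothing of this sort appears in your proposal: your primitivity remark is invoked only to show the simple surface itself is neat (where it is not needed --- rank considerations suffice), it is not applied to the product, and in any case over a finite field one cannot speak of the CM-type of $Y$ without first performing the lifting step. Filling this in is not a routine fix; it is the main non-formal ingredient of the non-simple case.
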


\begin{rems}
Let $X$ and $B$ satisfy the conditions (i)-(iii) of Theorem \ref{main}. Let us fix an embedding $B \subset \C$ of the imaginary quadratic field $B$ into the field $\C$ of complex numbers.
\begin{itemize}
\item
 Let
$$\sigma_1,\sigma_2, \sigma_3: \End^0(X)\hookrightarrow \C$$
be the distinct embeddings of sextic $\End^0(X)$ to $\C$ that act as the identity map on $B$. Let us put
$$\alpha_1=\sigma_1(\Fr_X), \ \alpha_2=\sigma_2(\Fr_X), \ \alpha_3=\sigma_3(\Fr_X).$$
Then $\alpha_1, \alpha_2, \alpha_3$ are distinct eigenvalues of $\Fr_X$, the set $R_X$ consists of six distinct elements
$$\{\alpha_1,\alpha_2,\alpha_3; \ q/\alpha_1,q/\alpha_2,q/\alpha_3\}$$
 and
$$1=\Norm(q^{-1}\Fr_X^2)=\prod_{i=1}^3q^{-1}\alpha_i^2.$$
In particular,
$$q^3=(\alpha_1 \alpha_2 \alpha_3)^2.$$
Notice that the set $\Phi=\{\sigma_1,\sigma_2, \sigma_3\}$ is a CM-type of the sextic CM-field $\End^0(X)$, which is {\sl not} primitive. (See \cite[p. 406]{Ogg} for a concise definition of a primitive CM-type.)
\item
Since $X$ is absolutely simple,  $\End^0(X)$ is a field and  $X$ is {\sl not} ordinary,  it follows from \cite[Th. 3.6.ii)]{Gon} that $p$ does {\sl not} split completely in $\End^0(X)$, i.e., the tensor product $\End^0(X)\otimes_{\Q}\Q_p$ is {\sl not} isomorphic to a direct sum of six copies of $\Q_p$.
\item
Since $\End^0(X)$ is a  sextic CM-field, it is a totally imaginary quadratic extension of a certain totally real cubic field $K$. Clearly, $B$ and $K$ are linearly disjoint over $K$ and the natural field homomorphism
$$B\otimes K \to \End^0(X), \ x\otimes y \mapsto xy$$
is a field isomorphism.
The cubic extension $K/\Q$ is {\sl not} Galois. Indeed, otherwise it is abelian (even cyclic), the field extension  $\End^0(X)/\Q$ is also a Galois extension and its Galois group coincides with the product $\Gal(B/\Q)\times \Gal(K/\Q)$. In particular, $\Gal(\End^0(X)/\Q)$ is abelian.  By \cite[Th. 3.6.iii]{Gon}, $p$  splits completely in $\End^0(X)$, which is not the case.
\item
For each prime $\ell \ne p$ there exists an {\sl exotic} six-dimensional $\ell$-adic Tate class on $X\times X$ \cite[Sect. 3]{ZarhinEssen}.
\end{itemize}
\end{rems}

\begin{rem}
See \cite[Sect. 4]{ZarhinEssen} for examples of non-neat abelian threefolds constructed by Hendrik Lenstra,Jr. See also Section \ref{exam} below.

Notice that the property to be ordinary is an {\sl open condition} in the moduli space of (polarized) abelian varieties of given dimension in characteristic $p$. Thus Theorem \ref{main} implies that a {\sl typical} abelian threefold is {\sl neat}. On the other hand, one may construct  non-neat ordinary abelian fourfolds, using results of \cite{NootCrelle}; see also Sect. \ref{dim4}.
\end{rem}

The paper is organized as follows. In Section \ref{neat} we express the neatness property of $X$ in terms of the minimal
polynomial $\P_{X,\min}(t)$ of $\Fr_X$. In Section \ref{nonsimple} we review results of \cite{ZarhinEssen}. In Section \ref{newton} we discuss Newton polygons of abelian varieties over finite fields.  Section \ref{dim2} contains a non-existence  result for a certain class of simple abelian surfaces. Section \ref{mainproof} contains the proof of Theorem \ref{main}.
Section \ref{exam} deals with examples.
 In Section \ref{dim4} we discuss certain abelian fourfolds over finite fields.

{\bf Acknowledgements}. I am grateful to  Hendrik Lenstra,Jr,  Frans Oort  and Alice Silverberg for helpful discussions, and to Igor Shparlinski for stimulating questions.
My special thanks go to Tatiana Bandman, whose comments helped to improve the exposition.

This work was started during my stay at the Max-Planck-Institut f\"ur Mathematik (Bonn) in September 2013. Most of this work was done during the academic year 2013/2014 when I was Erna and Jakob Michael Visiting Professor in the Department of Mathematics  at the Weizmann Institute of Science. The hospitality and support of both Institutes are gratefully acknowledged.

\section{Ranks of neat abelian varieties}
\label{neat}
As usual, $\ell$ is a prime different from $p$ and $\NN,\Z,\Z_{\ell},\Q,\C,\Q_{\ell}, \Q_p$ stand for the set of positive integers, the rings of integers and $\ell$-adic integers, and the fields of rational, complex, $\ell$-adic and $p$-adic numbers respectively.  If $z$ is a complex number then we write $\bar{z}$ for its complex-conjugate. Similarly, if $\phi: E \hookrightarrow \C$ is a field embedding then we write $\bar{\phi}$ for the corresponding complex-conjugate field embedding
$$\bar{\phi}: E \hookrightarrow \C, \ x \mapsto \overline{\phi(x)}.$$
We write $\Z_{(\ell)}$ for the subring
$$\Z_{(\ell)}:=\{\frac{a}{b}\mid a \in \Z, \ b \in \Z \setminus \ell\Z\}\subset \Q;$$
we may also view $\Z_{(\ell)}$ as the subring of $\Z_{\ell}$. (Similarly, the subring $\Z_{(p)} \subset \Q$ is defined.)
If $A$ is a finite set then we  write $\#(A)$ for number of its elements. We write $\rk(\Delta)$ for rank of a finitely generated commutative group $\Delta$.
Throughout this paper $k$ is a finite field of characteristic $p$ that consists of $q$ elements, $\bar{k}$ an algebraic closure of $k$ and $\Gal(K)=\Gal(\bar{k}/k)$ the absolute Galois group of $k$. It is well known that the profinite group $\Gal(K)$ is procyclic and the {\sl Frobenius automorphism}
$$\sigma_k: \bar{k} \to \bar{k}, \ x \mapsto x^q$$
is a topological generator of $\Gal(k)$.

Let $X$ be an abelian variety of positive dimension over $k$. We write $\End(X)$ for the ring of its $k$-endomorphisms and $\End^0(X)$ for the corresponding (finite-dimensional semisimple) $\Q$-algebra $\End(X)\otimes\Q$. We write $\Fr_X=\Fr_{X,k}$ for the Frobenius endomorphism of $X$. We have
$$\Fr_X \in \End(X)\subset \End^0(X).$$
By a theorem of Tate \cite[Sect. 3, Th. 2 on p, 140]{Tate1}, the $\Q$-subalgebra $\Q[\Fr_X]$ of $\End^0(X)$ generated by $\Fr_X$ coincides with the center of $\End^0(X)$. In particular, if $\End^0(X)$ is a field then $\End^0(X)=\Q[\Fr_X]$.

 If $\ell$ is a prime different from $p$ then we  write $T_{\ell}(X)$ for the $\Z_{\ell}$-Tate module of $X$ and $V_{\ell}(X)$ for the corresponding $\Q_{\ell}$-vector space
$$V_{\ell}(X)=T_{\ell}(X)\otimes_{\Z_{\ell}}\Q_{\ell}.$$
It is well known \cite[Sect. 18]{Mumford} that $T_{\ell}(X)$ is a free $\Z_{\ell}$-module of rank $2\dim(X)$ that may be viewed as a $\Z_{\ell}$-lattice in the $\Q_{\ell}$-vector space $V_{\ell}(X)$   of dimension $2\dim(X)$.

 By functoriality, $\End(X)$ and $\Fr_X$ acts
  on ($T_{\ell}(X)$ and) $V_{\ell}(X)$; it is well known that the action of $\Fr_X$ coincides with the action of $\sigma_k$. By a theorem of A. Weil \cite[Sect. 19 and Sect. 21]{Mumford}, $\Fr_X$ acts on $V_{\ell}(X)$ as a semisimple linear operator, its characteristic polynomial
$$\P_X(t)=\P_{X,k}(t)=\det (t \I -\Fr_X, V_{\ell}(X)) \in \Z_{\ell}[t]$$
lies in $\Z[t]$ and does not depend on a choice of $\ell$. In addition, all eigenvalues of $\Fr_X$ (which are algebraic integers) have archimedean absolute value equal to $q^{1/2}$.  This means that if
$$L=L_X \subset \C$$
is the splitting field of $\P_X(t)$ and  $$R_X=R_{X,k} \subset L$$ is the set of roots of $P(t)$ then $L$ is a finite Galois extension of $\Q$ such that for every field embedding $L \hookrightarrow \C$ we have $\mid \alpha \mid =q^{1/2}$ for all $\alpha \in R_X$.  Let  $\Gal(L/\Q)$ be the Galois group of $L/\Q$.
 Clearly, $R_X$ is a $\Gal(L/\Q)$-invariant (finite) subset of $L^{*}$. It follows easily that if  $\alpha \in R_X$ then $q/\alpha \in R_X$. Indeed,
$q/\alpha$ is the {\sl complex-conjugate} $\bar{\alpha}$ of $\alpha$. We have
$$q^{-1}\alpha^2=\frac{\alpha}{q/\alpha}.$$

\begin{rem}
\label{multiplicities}
Let $m(\alpha)$ be the multiplicity of the root $\alpha$ of $\P_X(t)$. Then
$$P_{X}(t)=\prod_{\alpha\in R_X}(t-\alpha)^{m(\alpha)}\in \C[t] \eqno(1)$$
and
$$\rk(\End(X))=\sum_{\alpha\in R_X} m(\alpha)^2 \eqno(2)$$
(see \cite[pp. 138--139]{Tate1}, especially (4) and (5)).
Let $\kappa$ be a finite overfield of $k$ of degree $d$ and
 $X^{\prime}=X \times_{k}\kappa$. Then $T_{\ell}(X_{\kappa})$ and $V_{\ell}(X_{\kappa})$ are canonically isomorphic to $T_{\ell}(X)$ and $V_{\ell}(X)$ respectively, $$\Fr_{X_{\kappa}}=\Fr_X^{d}\subset \End(X) \subset \End(X_{\kappa}),$$
 $$R_{X_{\kappa}}=\{\alpha^d\mid \alpha \in R_X\}, \ \P_{X_{\kappa}}(t)=\prod_{\alpha\in R_X}(t-\alpha^d)^{m(\alpha)}.$$

Suppose that $\alpha/\beta$ is {\sl not} a root of unity for  every pair of {\sl distinct} $\alpha, \beta \in R_X$.  This implies that $\alpha^d$ and $\beta^d$ are distinct roots of $\P_{X_{\kappa}}(t)$. It follows that for every $\alpha\in R_X$ the positive integer
$m(\alpha)$ coincides with the multiplicity of root $\alpha^d$ of the polynomial $\P_{X_{\kappa}}(t)$. The formulas (1) and (2) applied to $X_{\kappa}$ give us the equality $\rk(\End(X_{\kappa}))=\rk(\End(X))$, which implies that $\End(X_{\kappa})=\End(X)$, because the quotient $\End(X_{\kappa})/\End(X)$ is torsion-free \cite[Sect. 4, p. 501]{SerreTate}. In particular, if $X$ is simple then it is absolutely simple.
\end{rem}

\begin{rem}
\label{nonP}
Let $\Oc_L$ be the ring of integers in $L$. Clearly, $R_X \subset \Oc_L$. It is also clear that if $\B$ is a maximal ideal in $\Oc_L$ such that $\fchar(\Oc_L/\B) \ne p$  then all elements of $R_X$ are $\B$-adic units.

\end{rem}

 \begin{rem}
 \label{simple}
  Notice that $R_X$ is a $\Gal(L/\Q)$-orbit if and only if $\P_X(t)$ is a power of an irreducible polynomial (over $\Q$), which means that $X$ is isogenous over $k$ to a simple abelian variety over $k$ \cite[Theorem 2(e)]{Tate1}
 (see also \cite[Sect. 5, Th. 5.3 and Remark after it]{OortG}).
\end{rem}

By functoriality, $\End^0(X)$ and $\Q[\Fr_X]$ act on $V_{\ell}(X)$. This action extends by $\Q_{\ell}$-linearity to the embedding of $\Q_{\ell}$-algebras
$$\Q[\Fr_X]\otimes_{\Q}\Q_{\ell}\subset  \End^0(X)\otimes_{\Q}\Q_{\ell}=\End(X)\otimes_{\Q}\Q_{\ell}\subset \End_{\Q_{\ell}}(V_{\ell}(X)).$$

\begin{ex}
\label{simpleCM}
Let us assume that
$$E=\Q[\Fr_X]$$
 is a field. Then it is known \cite[Th. 2.1.1 on p. 768]{Ribet} that $V_{\ell}(X)$ carries the natural structure of a free $E\otimes_{\Q}\Q_{\ell}$-module and this module is free of rank $e=2\dim(X)/[E:\Q]$. It follows that
$$\P_X(t)=[\P_{X,\min}(t)]^e, \ 2\dim(X)=\deg(\P_X)=e\deg(\P_{X,\min})$$
where $\P_{X,\min}(t)$ is the minimal polynomial of the semisimple linear operator
$\Fr_X: V_{\ell}(X)\to V_{\ell}(X)$. Clearly, $\P_{X,\min}(t)$ has integer coefficients, $\P_{X,\min}(\Fr_X)=0 \in \End(X)$ and the natural homomorphism
$$\Q[t]/\P_{X,\min}(t)\Q[t] \to \Q[\Fr_X], \  t \mapsto \Fr_X+\P_{X,\min}(t)\Q[t]$$
is a field isomorphism.  In particular, $\P_{X,\min}(t)$ is irreducible over $\Q$.

This implies that if we fix an embedding $E \subset \C$ then $L_X$ is the normal closure of $E$ over $\Q$ and $R_X$ is the set of images of $\Fr_X$ in $\C$ with respect to all field embeddings $E \hookrightarrow \C$; in addition, every eigenvalue $\alpha \in R_X$ has multiplicity $e$. Since $\Fr_X$ generates $E$ (over $\Q$), we conclude that if
$$\phi: E \hookrightarrow \C, \ \psi: E \hookrightarrow \C$$
are two {\sl distinct} field embeddings of $E$ into $\C$ then
$$\phi(\Fr_X) \ne \psi(\Fr_X).$$
Now assume additionally that $X$ is simple and $\dim(X)>1$;  if $\dim(X)=2$ then we also assume  that
$X$ is absolutely simple. Then it is known (\cite[Sect. 1, Exemples]{Tate2}) that $E$ is a CM-field. In particular, it has even degree say, $2d=\deg(\P_{X,\min})$ and for each field embedding $\phi: E \hookrightarrow \C$  its complex-conjugate
$\bar{\phi}: E \hookrightarrow \$$
 does {\sl not} coincide with $\phi$. Now let $\Phi$ be a CM-type of $E$, i.e., a set $\{\phi_1, \dots \phi_d\}$ of $d$ distinct field embeddings $E \hookrightarrow \C$ such that $\phi_j \ne \overline{\phi_j}$ for all  $i,j$. Now if $\bar{\Phi}=\{\bar{\phi}\mid \phi \in \Phi\}$ then $\Phi \bigcap \bar{\Phi}=\emptyset$ and
$$\Phi\cup \bar{\Phi}=\{\phi_1, \dots , \phi_d; \ \overline{\phi_1}, \dots , \overline{\phi_d}\}$$
coincides with the $2d$-element set of all field embeddings $E \hookrightarrow \C$.
It follows that
if we put $\alpha_i=\phi_i(\Fr_X)$ for all $i$ with $1 \le i \le d$ then  $R_X$ consists of $2d$ distinct elements
$$\{\alpha_1, \dots , \alpha_d;\ \overline{\alpha_1}=\frac{q}{\alpha_1}, \dots , \overline{\alpha_d}=\frac{q}{\alpha_d}\}.$$
Notice also that
$$ 2\dim(X)=\deg(\P_X)=e\cdot \deg(\P_{X,\min})=e\cdot 2d.$$
\end{ex}

 We write
$$\Gamma=\Gamma(X,k)$$ for the multiplicative subgroup of $L^{*}$ generated by $R_X$. Using Weil's results mentioned above, one may easily check  that $\Gamma_X$ contains $q$ and is a finitely generated group of rank $\rk(\Gamma)\le \dim(X)+1$. Notice that the rank of $\Gamma$ is $\dim(X)+1$ if and only if $\Gamma$ is a free commutative group of rank $\dim(X)+1$ \cite{ZarhinK3}.

\begin{rem}
\label{nonPP}
 It follows from Remark \ref{nonP} that if $\B$ is a maximal ideal in $\Oc_L$ such that $\fchar(\Oc_L/\B) \ne p$  then all elements of $\Gamma(X.k)$ are $\B$-adic units.
\end{rem}

 We write
$$\Gamma^{\prime}=\Gamma^{\prime}(X,k)$$ for the multiplicative subgroup of $L^{*}$ generated by  all the eigenvalues of $q^{-1}\Fr_X^2$. In other words, $\Gamma^{\prime}$ is the multiplicative (sub)group generated by
$$R_X^{\prime}=\{q^{-1}\alpha^2 \mid \alpha \in R_X\}.$$
Clearly, all the archimedean absolute values of all elements of $\Gamma^{\prime}$ are equal to $1$.

One may easily check that
$$\rk(\Gamma^{\prime})+1=\rk(\Gamma)$$
and $\Gamma^{\prime}$ and $q$ generate a subgroup of finite index in $\Gamma$.
We define the rank of $X$ as $\rk(\Gamma^{\prime})$ and denote it by $\rk(X)$. Clearly,
$$0 \le \rk(X) \le \dim(X).$$
It is known \cite[Sect. 2.9 on p. 277 and Remark 2.9.2 on p. 278]{ZarhinEssen} that if $Y$ is an abelian variety over $k$ then
$$\max(\rk(X),\rk(Y)) \le \rk(X\times Y) \le \rk(X) + \rk(Y).$$
Notice also that $\rk(X)$ does not depend on a field of definition of $X$ and would not change if we replace $X$ by an isogenous abelian variety. In addition, $\rk(X)=0$ if and only if $X$ is a supersingular abelian variety (\cite[Sect. 2.0]{ZarhinEssen}).

This implies the following {\sl trivial} multiplicative relation between eigenvalues $\alpha, \beta, q/\alpha, q/\beta \in R_X$.
$$\alpha \cdot  \frac{q}{\alpha}=q=\beta \cdot  \frac{q}{\beta}. \eqno(3)$$
Let us put
$$R_X^{\prime}:=\{q^{-1}\alpha^2 \mid \alpha \in R_X\}.$$
Clearly, all elements of $R_X^{\prime}$ have archimedean absolute value $1$ with respect to all field embeddings $L \hookrightarrow \C$ and the map $\beta \mapsto \beta^{-1}$ is an involution of $R_X^{\prime}$.

Assume that $k$ is {sufficiently large} with respect to $X$, i.e., the multiplicative group $\Gamma(X,k)$ generated by $k$ does not contain  roots of unity (except $1$). This implies (thanks to Remark \ref{multiplicities}) that all the endomorphisms of $X$ are defined over $k$. On the other hand,
the map
$$R_X \to R_X^{\prime}, \ \alpha \mapsto \alpha^{\prime}=q^{-1}\alpha^2$$
is a bijective map that sends $q/\alpha$ to $1/\alpha^{\prime}$.

 Suppose that there are an
 integer-valued function $e:R_X \to \Z$ and an integer $M$ such that
$$\prod_{\alpha\in R_X} \alpha^{e(\alpha)}=q^M. \eqno(4)$$
Since the archimedean absolute value of each $\alpha$ is $\sqrt{q}$, we have
 $$\frac{1}{2} \left(\sum_{\alpha \in R_X} e(\alpha)\right)=M$$ and therefore
$$2M=\sum_{\alpha \in R_X} e(\alpha), \ \prod_{\alpha\in R_X} \alpha^{2e(\alpha)}=q^{2M}. $$
This implies that        $$\prod_{\alpha\in R_X}(q^{-1} \alpha^2)^{e(\alpha)}=1. \eqno(5)$$
We may rewrite (5) as
$$\prod_{\beta\in R_X^{\prime}} \beta^{e^{\prime}(\beta)}=1 \eqno(\mathrm{5bis})$$
where $e^{\prime}(\alpha^2/q):=e(\alpha)$.

Conversely, if (5bis) holds for some $e^{\prime}:R_X^{\prime} \to \Z$ then we have
$$\prod_{\alpha\in R_X} \alpha^{e(\alpha)}=q^M$$
with $e(\alpha):=2 e^{\prime}(\alpha^2/q)$ and $M:=\sum_{\beta\in R_X^{\prime}}e^{\prime}(\beta)$.
We say that $X$ is {\sl neat} if it enjoys one of the following obviously equivalent conditions (we continue to assume that $k$ is sufficiently large).

\begin{itemize}
\item[(i)]
Suppose an integer-valued function $e: R_X \to \Z$ and a positive integer $M$ satisfy (3). Then $e(\alpha)=e(q/\alpha) \ \forall \alpha \in R_X$.
\item[(ii)]
Suppose an integer-valued function $e^{\prime}: R_X^{\prime} \to \Z$  satisfies (5bis). Then $e^{\prime}(\beta)=e^{\prime}(1/\beta) \ \forall \beta \in R_X^{\prime}$.
\end{itemize}

\begin{rem}
Let us consider the (sub)set
$R_{X,\ssL}$ of $\alpha \in R_X$ such that  $q^{-1}\alpha^2$ is a root of unity. (Here the subscript ss is short for supersingular.) Clearly, $\alpha \in R_{X,\ssL}$ if and only if $q \alpha^{-1} \in R_{X,\ssL}$. It is also clear that if  $R_{X,\ssL}$ is non-empty then $1/2$ is a {\sl slope} of the Newton polygon of $X$ (see Sect. \ref{newton} below). The converse is not true if $\dim(X)>1$.

Recall \cite[Definition 2.3 on p. 276]{ZarhinEssen} that $k$ is {\sl sufficiently large} with respect to $X$ or just sufficiently large if $\Gamma(X,k)$ does not contain roots of unity different from $1$.    If $m$ the order of the subgroup of roots of unity in $\Gamma(X,k)$ and $\kappa/k$  is a finite algebraic field extension then $\kappa$ is sufficiently large for $X$ if and only if the  degree $[\kappa:k]$ is divisible by $m$  \cite[p. 276]{ZarhinEssen}.
In particular, if $k$ is sufficiently large and $\beta \in R_X^{\prime}$ is a root of unity then $\beta=1$. Notice also that if $\rk(X)=\dim(X)$ then $\Gamma(X,k)$ is a free commutative group \cite[Sect. 2.1]{ZarhinK3}, i.e., $k$ is sufficiently large.
\end{rem}

\begin{lem}
\label{super}
Suppose that $k$ is sufficiently large with respect to $X$. If $R_{X,\ssL}$ is non-empty then the following conditions hold:

\begin{itemize}
\item[(i)] $q$ is a square.
\item[(ii)]
$R_{X,\ssL}$ is either the singleton $\{\sqrt{q}\}$ or the singleton  $\{-\sqrt{q}\}$.
In both cases $R_X^{\prime}$ contains
$$q^{-1}(\pm \sqrt{q})^2= 1.$$
\end{itemize}
\end{lem}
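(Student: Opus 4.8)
The plan is to exploit the fact that every element of $R_X'$ lies in $\Gamma(X,k)$, a group that by hypothesis contains no nontrivial root of unity, together with the $\Gal(L/\Q)$-invariance of $R_X$.

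First, pick $\alpha\in R_{X,\ssL}$, so that $\beta:=q^{-1}\alpha^2$ is a root of unity. Since $\alpha\in R_X\subset\Gamma(X,k)$ and $q\in\Gamma(X,k)$, we have $\beta\in\Gamma(X,k)$; as $k$ is sufficiently large, $\beta=1$, i.e. $\alpha^2=q$. Thus $\alpha\in\{\sqrt q,-\sqrt q\}$ (for a fixed square root $\sqrt q$, which lies in $L$ because $\sqrt q=\pm\alpha$ and $\alpha\in R_X\subseteq L$), and $q^{-1}\alpha^2=1\in R_X'$ — which already gives the last assertion of (ii). It also shows $R_{X,\ssL}\subseteq\{\sqrt q,-\sqrt q\}\cap R_X$.

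Next I would prove (i) by contradiction. Assume $q$ is not a perfect square; then $t^2-q$ is irreducible over $\Q$, so $\Gal(L/\Q)$ permutes its two roots $\sqrt q,-\sqrt q$ transitively. Since $R_X$ is $\Gal(L/\Q)$-stable and contains $\alpha\in\{\sqrt q,-\sqrt q\}$, it contains both $\sqrt q$ and $-\sqrt q$; but then $-1=(-\sqrt q)\cdot(\sqrt q)^{-1}\in\Gamma(X,k)$ is a nontrivial root of unity, contradicting that $k$ is sufficiently large. Hence $q$ is a square and $\sqrt q\in\Z$.

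For the rest of (ii): the same argument shows $\sqrt q$ and $-\sqrt q$ cannot both lie in $R_X$, so at most one of them does; since $R_{X,\ssL}$ is nonempty and contained in $\{\sqrt q,-\sqrt q\}\cap R_X$, exactly one of $\pm\sqrt q$ lies in $R_X$, and $R_{X,\ssL}$ equals the corresponding singleton $\{\sqrt q\}$ or $\{-\sqrt q\}$ (each such element is indeed in $R_{X,\ssL}$, as $q^{-1}(\pm\sqrt q)^2=1$). The one step requiring a little care is (i): it is precisely the combination of the irreducibility of $t^2-q$ with the Galois-invariance of $R_X$ that produces the forbidden element $-1\in\Gamma(X,k)$; everything else is bookkeeping.
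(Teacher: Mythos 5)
Your proof is correct and follows essentially the same route as the paper: from $q^{-1}\alpha^2$ being a root of unity in $\Gamma(X,k)$ deduce $\alpha^2=q$, then use Galois-stability of $R_X$ (when $q$ is not a square) to force $-1=\frac{-\sqrt{q}}{\sqrt{q}}\in\Gamma(X,k)$, contradicting that $k$ is sufficiently large, and conclude the singleton statement. Your write-up is slightly more explicit than the paper's in ruling out that both $\pm\sqrt{q}$ lie in $R_X$ once $q$ is a square, but this is the same argument.
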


\begin{proof}
Let $\alpha \in R_{X,\ssL}$. Since the root of unity $q^{-1}\alpha^2$ lies in
$\Gamma(X,k)$, we conclude that $\alpha^2=q$. Since $R_X$ is $\Gal(L/\Q)$-stable, we conclude that if $q$ is not a square then both $\sqrt{q}$ and $-\sqrt{q}$ lie in $R_X$ and therefore
$$-1=\frac{-\sqrt{q}}{\sqrt{q}}\in \Gamma(X,k),$$
which is not the case, because $k$ is sufficiently large. Therefore $q$ is a square and $R_X$ is either the singleton $\{\sqrt{q}\}$ or the singleton  $\{-\sqrt{q}\}$.
\end{proof}

\begin{rem}
\label{clean}
Suppose that $k$ is sufficiently large. Then if $\alpha_1$ and $\alpha_2$ are {\sl distinct} elements of $R_X$ then
$$\frac{\alpha_1}{\alpha_2} \ne \pm 1$$
and therefore
$q^{-1}\alpha_1^2$ and $q^{-1}\alpha_2^2$
are {\sl distinct} elements of  $R_X^{\prime}$. This implies that
$$\#(R_X)=\#(R_X^{\prime}).$$
\end{rem}

Till the end of this Section we assume that $k$ is sufficiently large with respect to $X$.

In order to compute the rank of {\sl neat} abelian varieties, let us consider the minimal polynomial $\P_{X,\min}(t)$ of the semisimple linear operator
$\Fr_X: V_{\ell}(X)\to V_{\ell}(X)$. The set of roots of
$\P_{X,\min}(t)$ coincides with one of $\P_X(t)$, i.e., with $R_X$; in addition, all the roots of $\P_X(t)$ are simple. It follows from Remark \ref{simple} that if $X$ is simple or $k$-isogenous to a $k$-simple abelian variety  then $\P_{X,\min}(t)$ is irreducible over $\Q$ and  $\P_X(t)=[\P_{X,\min}(t)]^d$ for a certain positive integer $d$.  In general case the minimal polynomial
$$\P_{X,\min}(t)=\prod_{\alpha \in R_X}(t-\alpha)$$
and its degree
$\deg(\P_{X,\min})$ coincides with $\#(R_X)$.

\begin{ex}
\label{supersingular}
 Suppose $X$ a supersingular abelian variety. According to Subsection \ref{NewtonType},  $\alpha^2/q$ is a root of unity for all $\alpha \in R_X$, i.e., $R_X=R_{X,\ssL}$.
It follows from Lemma \ref{super} that $q$ is a square and
  $R_X$ is either the singleton $\{-\sqrt{q}\}$ or the singleton  $\{\sqrt{q}\}$. Then  $\P_{X,\min}(t)$ is a linear polynomial that equals  $t-\sqrt{q}$ or $t+\sqrt{q}$ respectively. This implies that that $\P_X(t)=(t\pm \sqrt{q})^{2\dim(X)}$ and $R_X^{\prime}$ is always the singleton $\{1\}$. It follows that $X$ is neat.
\end{ex}

\begin{ex}
\label{nosuper}
Suppose $R_{X,\ssL}$ is {\sl empty}. This implies that $\alpha \ne q/\alpha$ for every $\alpha \in R_X$, the set $R_X$ consists of even, say, $2d$ elements. Then one may choose $d$ distinct elements $\alpha_1, \dots , \alpha_d$ of $R_X$ such that
$$R_X =\{\alpha_1, \dots, \alpha_d;\  q/\alpha_1, \dots , q/\alpha_d\}.$$
If we put $\beta_i=q^{-1} \alpha_i^2$ then $R_X^{\prime}$ also consists of $2d$ (distinct) elements and coincides with
$$\{\beta_1, \dots, \beta_d; \ \beta_1^{-1}, \dots , \beta_d^{-1}\}.$$
In particular, $\rk(X) \le d$.
Now $X$ is neat if and only if the set $\{\beta_1, \dots, \beta_d\}$ is multiplicatively independent, which means that
$$\rk(X)=d.$$
If this is the case then
$$\rk(X)=d=\frac{\#(R_X)}{2}=\frac{\deg(\P_{X,\min})}{2}.$$
\end{ex}

\begin{ex}
\label{mixed}
Suppose $R_{X,\ssL}$ is non-empty but does {\sl not} coincide with the whole $R_X$. Let us denote by $\alpha_0$ the only element of $R_{X,\ssL}$; as we have seen above, $q$ is a square and $\alpha_0=\pm \sqrt{q}$. This implies  that if $\alpha$ is an element of $R_X$ that is different from $\alpha_0$ then  $\alpha \ne q/\alpha$, the set $R_X\setminus \{\alpha_0\}$ consists of even number of, say, $2d$ elements. Then  one may choose $d$ distinct elements $\alpha_1, \dots , \alpha_d$ of $R_X\setminus \{\alpha_0\}$ such that
$$R_X =\{\alpha_0; \ \alpha_1, \dots, \alpha_d;\  q/\alpha_1, \dots , q/\alpha_d\}.$$
If we put $\beta_i=q^{-1} \alpha_i^2$ then $\beta_0=1$ and $R_X^{\prime}$  consists of $(2d+1)$ distinct elements
$$\{1; \beta_1, \dots, \beta_d; \ \beta_1^{-1}, \dots , \beta_d^{-1}\}.$$
In particular, $\rk(X) \le d$.
Now $X$ is neat if and only if the set $\{\beta_1, \dots, \beta_d\}$ is multiplicatively independent, which means that
$$\rk(X)=d.$$ If this is the case then
$$\rk(X)=d=\frac{\#(R_X)-1}{2}=\frac{\deg(\P_{X,\min})-1}{2}.$$
\end{ex}

\begin{ex}
\label{rank1}
Suppose that $X$ is simple and $\rk(X)=1$. It follows from Lemma 2.10 of \cite{ZarhinEssen} that $R_X^{\prime}$ consists of two elements, say, $\beta$ and $\beta^{-1}$. Clearly, $\beta$ is not a root of unity. This implies easily that $X$ is neat.
\end{ex}

We will need the following elementary lemma.

\begin{lem}
\label{elementary}
Let $p$ be a prime, $B$ an imaginary quadratic field, $T$ the set of maximal ideals in $B$ that lie above $p$. Let $U_T \subset B^{*}$ be the multiplicative subgroup of $T$-units in $B$ and $U_T^{1}$ the subgroup of $T_B$ that consists of all $\gamma \in U_T$ such that  the archimedian absolute value of $\gamma$ is  $1$.  If $U_T^{1}$ is infinite then $p$ splits in $B$ (i.e., $\#(T)=2$), $\rk(U_T)=2$ and $\rk(U_T^{1})=1$.
\end{lem}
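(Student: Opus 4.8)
The plan is to realize $U_T$ as a group of $S$-units, compute its rank via Dirichlet's $S$-unit theorem, and then cut it down by the archimedean absolute value, which on $T$-units is controlled by a single $\Z$-valued homomorphism.

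First I would fix a field embedding $\sigma\colon B\hookrightarrow\C$. Since $B$ is imaginary quadratic it has exactly one archimedean place, so $S:=\{v_\infty\}\cup T$ is a finite set of places of $B$ and $U_T$ is precisely the group of $S$-units of $B$; by Dirichlet's $S$-unit theorem $U_T$ is finitely generated of rank $\#S-1=\#T$. As $[B:\Q]=2$, we have $\#T\in\{1,2\}$, so $\rk(U_T)\in\{1,2\}$. Next, for $\gamma\in U_T$ the fractional ideal $(\gamma)$ is supported on $T$, so
$$\lvert\sigma(\gamma)\rvert^{2}=N_{B/\Q}(\gamma)=\prod_{\q\in T}N(\q)^{\ord_\q(\gamma)}=p^{\,\mu(\gamma)},\qquad \mu(\gamma):=\sum_{\q\in T}f_\q\,\ord_\q(\gamma)\in\Z,$$
where we used that $N_{B/\Q}(\gamma)=\lvert\sigma(\gamma)\rvert^{2}$ is a positive rational for imaginary quadratic $B$, and $f_\q$ denotes the residue degree of $\q$ over $p$. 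Hence $\mu\colon U_T\to\Z$ is a group homomorphism with $U_T^{1}=\ker(\mu)$, so from the exact sequence $0\to U_T^{1}\to U_T\to\mu(U_T)\to0$ and additivity of rank we get $\rk(U_T^{1})=\rk(U_T)-\rk(\mu(U_T))$ with $\rk(\mu(U_T))\in\{0,1\}$.

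Now I would rule out $\#T=1$. If $T=\{\q\}$, then $\mu(\gamma)=f_\q\,\ord_\q(\gamma)$, so $U_T^{1}=\ker(\mu)$ consists of the $T$-units with $\ord_\q(\gamma)=0$, i.e. of the units of $\Oc_B$; but $\Oc_B^{*}$ is finite, contradicting the hypothesis that $U_T^{1}$ is infinite. Hence $\#T=2$, i.e. $p$ splits in $B$, and $\rk(U_T)=2$. Writing $p\Oc_B=\q_1\q_2$ with $f_{\q_1}=f_{\q_2}=1$, we get $\mu(p)=\ord_{\q_1}(p)+\ord_{\q_2}(p)=2\neq0$, so $\mu(U_T)$ is a nontrivial subgroup of $\Z$, whence $\rk(\mu(U_T))=1$ and $\rk(U_T^{1})=\rk(U_T)-1=1$. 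This gives all three assertions. There is no genuine obstacle here: the only points needing care are the identity $N_{B/\Q}(\gamma)=\lvert\sigma(\gamma)\rvert^{2}$ for imaginary quadratic fields and the finiteness of $\Oc_B^{*}$, both standard, after which the rank bookkeeping is automatic. (One could also bypass the appeal to Dirichlet's theorem altogether, since the map $U_T\to\Z^{\#T}$, $\gamma\mapsto(\ord_\q(\gamma))_{\q\in T}$, has finite kernel $\Oc_B^{*}$ and finite-index image, as some power of each $\q$ is principal; but invoking the $S$-unit theorem is cleaner.)
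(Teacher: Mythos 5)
Your proof is correct and follows essentially the same route as the paper: both arguments rest on the generalized Dirichlet unit theorem giving $\rk(U_T)=\#(T)$, together with the observation that the archimedean absolute value of a $T$-unit in an imaginary quadratic field is forced to be a power of $p$, which rules out $\#(T)=1$ (since $\Oc_B^{*}$, equivalently the roots of unity in $B$, form a finite group) and pins down $\rk(U_T^{1})=1$ in the split case. The only cosmetic difference is that you package the absolute-value condition as the kernel of the explicit homomorphism $\mu$, which yields the rank count $\rk(U_T^{1})=\rk(U_T)-1$ directly, whereas the paper gets the strict inequality from $p\notin U_T^{1}$ and the lower bound $\rk(U_T^{1})\ge 1$ from the infiniteness hypothesis.
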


\begin{proof}
By the generalized  Dirichlet's unit theorem \cite[Ch. V, Sect. 1]{LangA}, $U_T$ is a finitely generated commutative group of rank $\#(T)$.
Clearly, $U_T$  contains the element $p$ of infinite order.  If $\#(T)=1$ then $\rk(U_T)=1$ and therefore for each $$\gamma \in  U_T^{1} \subset U_T$$ a certain positive power of $\gamma$ is a power of $p$. However,  the archimedean absolute value of $\gamma$ equals $1$ and therefore $\gamma$ must be a root of unity, which is not the case, since there are only finitely many roots of unity in $B$. So, $\#(T)=2$, i.e., $p$ splits in $B$. In addition, $U_T$ has rank $2$. Since no power of $p$ (except $1=p^0$)  lies in $U_T^{1}$, we conclude that  $\rk(U_T^{1})< \rk(U_T)=2$. Since $\rk(U_T^{1})\ge 1$, we conclude that $\rk(U_T^{1})= 1$.
\end{proof}

\begin{cor}
\label{rankquad}
Let $B$ be an imaginary quadratic subfield in $L$. Suppose that the intersection
$$\Gamma^{\prime}(X,k)_B:=B \bigcap  \Gamma^{\prime}(X,k)$$
of $B$ and $\Gamma^{\prime}(X,k)$ is infinite. Then $p$ splits in $B$ and the infinite multiplicative group $\Gamma^{\prime}(X,k)_B$ has rank $1$.
\end{cor}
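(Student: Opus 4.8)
The plan is to deduce the corollary from Lemma \ref{elementary}, applied to the imaginary quadratic field $B$, together with the integrality information recorded in Remark \ref{nonPP}. Write $T$ for the set of maximal ideals of the ring of integers of $B$ lying above $p$, and let $U_T \subset B^{*}$ and $U_T^{1} \subset U_T$ be as in Lemma \ref{elementary}. The first step is to establish the inclusion
$$\Gamma^{\prime}(X,k)_B \subseteq U_T^{1}.$$
Let $\gamma \in \Gamma^{\prime}(X,k)_B$. Since every element of $\Gamma^{\prime}(X,k)$ has archimedean absolute value $1$ with respect to every embedding $L \hookrightarrow \C$, the same holds for $\gamma$. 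Next, note that $\Gamma^{\prime}(X,k) \subseteq \Gamma(X,k)$, because $q$ and every $\alpha \in R_X$ lie in $\Gamma(X,k)$ and hence so does $q^{-1}\alpha^2$. Now let $\q$ be any maximal ideal of $\Oc_B$ with $\fchar(\Oc_B/\q) \ne p$ and choose a maximal ideal $\B$ of $\Oc_L$ above $\q$; then $\fchar(\Oc_L/\B) \ne p$, so by Remark \ref{nonPP} the $\B$-adic valuation of $\gamma$ vanishes, and hence so does its $\q$-adic valuation, because the restriction of the $\B$-adic valuation to $B^{*}$ is a positive integer multiple of the $\q$-adic valuation. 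Therefore $\gamma$ is a $T$-unit of $B$ whose archimedean absolute value equals $1$, i.e. $\gamma \in U_T^{1}$, which proves the inclusion.

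Since $\Gamma^{\prime}(X,k)_B$ is infinite by hypothesis, it follows that $U_T^{1}$ is infinite, so Lemma \ref{elementary} applies and yields that $p$ splits in $B$ and that $U_T^{1}$ is a finitely generated group of rank $1$. Finally, $\Gamma^{\prime}(X,k)_B$ is a subgroup of $\Gamma^{\prime}(X,k)$, which is finitely generated, so $\Gamma^{\prime}(X,k)_B$ is itself finitely generated; being an infinite subgroup of the rank-one group $U_T^{1}$, it must also have rank $1$. This establishes both assertions of the corollary.

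The argument is short and essentially formal; the only point that needs a moment's care is the passage from $\B$-adic to $\q$-adic valuations, which is just the standard compatibility of valuations under the field inclusion $B \subset L$. I do not foresee any genuine obstacle, since all the substantive content has already been isolated in Remark \ref{nonPP} and Lemma \ref{elementary}.
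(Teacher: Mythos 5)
Your proof is correct and follows essentially the same route as the paper: the paper's proof simply notes that $\Gamma^{\prime}(X,k)_B$ is an infinite subgroup of $U_T^{1}$ and then invokes Lemma \ref{elementary}. The only difference is that you spell out, via Remark \ref{nonPP} and the compatibility of valuations under $B\subset L$, the inclusion $\Gamma^{\prime}(X,k)_B\subseteq U_T^{1}$ that the paper leaves as an observation, which is a harmless (indeed helpful) elaboration.
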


\begin{proof}
Notice that (in the notation of Lemma \ref{elementary}) $\Gamma^{\prime}(X,k)_B$ is an infinite subgroup of $U_T^{1}$. In particular, $U_T^{1}$ is also infinite. Now Corollary follows readily from Lemma \ref{elementary}.
\end{proof}

\begin{thm}
\label{inert}
Suppose $X$ is simple, $\dim(X)>1$, $k$ is sufficiently large with respect to $X$ and the degree $[E:\Q]$
of the CM-field $E=\Q[\Fr_X]$ is an even number $2d$ that is strictly greater than $2$.
Suppose that $E$ contains an imaginary quadratic field $B$ such that $p$ does not split in $B$. Then
$$\Norm_{E/B}(q^{-1}\Fr_X^2)=1\in B$$ and $\rk(X)<d$.
\end{thm}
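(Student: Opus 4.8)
The idea is to realise $\Norm_{E/B}(q^{-1}\Fr_X^2)$ explicitly as a product of elements of $R_X^{\prime}$, observe that this product lies in $B\cap\Gamma^{\prime}(X,k)$, and then invoke Corollary \ref{rankquad} to force it to equal $1$; the resulting multiplicative relation among the $\beta_i$ immediately cuts $\rk(\Gamma^{\prime}(X,k))$ below $d$.

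First I would pass to the CM-theoretic picture. Since $X$ is simple and $k$ is sufficiently large, $X$ is absolutely simple (Remark \ref{multiplicities}); as $\dim(X)>1$, Example \ref{simpleCM} applies and tells us that $E=\Q[\Fr_X]$ is a CM-field of degree $2d$, that $L=L_X$ is the normal closure of $E$ over $\Q$, and that $R_X$ is the set of images of $\Fr_X$ under the $2d$ embeddings $E\hookrightarrow\C$, each such image a simple root of $\P_{X,\min}(t)$. Now fix the embedding $B\hookrightarrow\C$ induced by $B\subset E\subset L\subset\C$, and let $\sigma_1,\dots,\sigma_d$ be the $d=[E:B]$ embeddings $E\hookrightarrow\C$ extending it. The key elementary observation is that the only nontrivial automorphism of the imaginary quadratic field $B$ is complex conjugation, so $\bar{\sigma}_i$ restricts on $B$ to the \emph{other} embedding of $B$ for every $i$; hence $\bar{\sigma}_i\ne\sigma_j$ for all $i,j$, and the $2d$ embeddings of $E$ are exactly $\sigma_1,\dots,\sigma_d,\bar{\sigma}_1,\dots,\bar{\sigma}_d$. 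Thus $\Phi=\{\sigma_1,\dots,\sigma_d\}$ is a CM-type of $E$. Setting $\alpha_j=\sigma_j(\Fr_X)$ and $\beta_j=q^{-1}\alpha_j^2$, Example \ref{simpleCM} then gives $R_X=\{\alpha_1,\dots,\alpha_d;\ q/\alpha_1,\dots,q/\alpha_d\}$ and $R_X^{\prime}=\{\beta_1,\dots,\beta_d;\ \beta_1^{-1},\dots,\beta_d^{-1}\}$, so that $\Gamma^{\prime}(X,k)$ is generated by $\beta_1,\dots,\beta_d$.

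Next I would compute the norm. Since $E/B$ is separable of degree $d$, for every $x\in E$ one has $\Norm_{E/B}(x)=\prod_{j=1}^d\sigma_j(x)\in B$; applying this to $x=q^{-1}\Fr_X^2$ yields $\Norm_{E/B}(q^{-1}\Fr_X^2)=\prod_{j=1}^d q^{-1}\sigma_j(\Fr_X)^2=\prod_{j=1}^d\beta_j$. Each $\beta_j$ lies in $R_X^{\prime}\subset\Gamma^{\prime}(X,k)$, so this product lies in $\Gamma^{\prime}(X,k)_B=B\cap\Gamma^{\prime}(X,k)$. Because $p$ does not split in $B$, Corollary \ref{rankquad} (read contrapositively) forbids $\Gamma^{\prime}(X,k)_B$ from being infinite, so it is finite; being a subgroup of $\Gamma(X,k)$, which contains no root of unity other than $1$ since $k$ is sufficiently large, it is trivial. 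Hence $\Norm_{E/B}(q^{-1}\Fr_X^2)=\prod_{j=1}^d\beta_j=1$, which is the first assertion. Finally, this relation shows that the surjection $\Z^d\twoheadrightarrow\Gamma^{\prime}(X,k)$, $(m_1,\dots,m_d)\mapsto\prod_j\beta_j^{m_j}$, kills the primitive vector $(1,\dots,1)$, so $\Gamma^{\prime}(X,k)$ is a quotient of $\Z^d/\Z\cdot(1,\dots,1)\cong\Z^{d-1}$; therefore $\rk(X)=\rk(\Gamma^{\prime}(X,k))\le d-1<d$.

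The only point requiring any care is the claim that $\{\sigma_1,\dots,\sigma_d\}$ is a CM-type, equivalently that the $2d$ embeddings of $E$ into $\C$ split into the $d$ restricting to the chosen embedding of $B$ and the $d$ restricting to its conjugate, with the two halves interchanged by complex conjugation. This is what guarantees both that $R_X^{\prime}$ has the clean shape $\{\beta_j^{\pm1}\}$ (so that $\Gamma^{\prime}(X,k)$ really is generated by $\beta_1,\dots,\beta_d$) and that $\Norm_{E/B}(q^{-1}\Fr_X^2)$ is literally $\beta_1\cdots\beta_d$; it follows at once from the fact that an imaginary quadratic field has complex conjugation as its unique nontrivial automorphism. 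Everything else is a direct application of Corollary \ref{rankquad} together with bookkeeping from Example \ref{simpleCM}.
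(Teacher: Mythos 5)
Your proposal is correct and follows essentially the same route as the paper: identify $\Norm_{E/B}(q^{-1}\Fr_X^2)$ with $\prod_{j}\beta_j\in B\cap\Gamma^{\prime}(X,k)$ via the CM-type of embeddings fixing $B$ (Example \ref{simpleCM}), use Corollary \ref{rankquad} contrapositively (since $p$ does not split in $B$) to force this intersection to be finite, hence trivial because $k$ is sufficiently large, and then note that the relation $\prod_j\beta_j=1$ cuts the number of generators of $\Gamma^{\prime}(X,k)$ to $d-1$. Your extra verification that $\{\sigma_1,\dots,\sigma_d\}$ is a CM-type, which the paper asserts without comment, is a correct and welcome detail but not a different argument.
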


\begin{proof}
By Remark \ref{multiplicities} $X$ is absolutely simple.
Let us fix an embedding of $B$ into $\C$ and view $B$ as the subfield of $\C$. Let $\Phi$ be the $d$-element set of field embeddings $\phi_i: E \hookrightarrow \C$ that coincide on $B$ with the identity map ($1\le i \le d$). Then $\Phi$ is a
CM-type of $E$ and, thanks to Example \ref{simpleCM}
$R_X$ consists of $2d$ distinct elements
$$\{\alpha_1, \dots , \alpha_d;\ \frac{q}{\alpha_1}, \dots , \frac{q}{\alpha_d}\}$$
where
$\alpha_i=\phi_i(\Fr_X)$ ($1\le i \le d$).
Clearly, (in the notation of Corollary \ref{rankquad})
$$\gamma:=\Norm_{E/B}(q^{-1}\Fr_X^2) =\prod_{i=1}^d \frac{\alpha_i^2}{q} \in B \bigcap  \Gamma^{\prime}(X,k)=
\Gamma^{\prime}(X,k)_B.$$
By Corollary \ref{rankquad} the group $\Gamma^{\prime}(X,k)_B$ is finite and therefore $\gamma$ is a root of unity. Since $k$ is sufficiently large, $\gamma=1$, i.e.
$$\Norm_{E/B}(q^{-1}\Fr_X^2) =\prod_{i=1}^d \frac{\alpha_i^2}{q}=1.$$
This proves the first assertion about the norm. Clearly, $\Gamma^{\prime}(X,k)$ is generated by $d$ elements
$\{q^{-1} \alpha_i^2 \mid 1 \le d\}$ and their product is $1$. This implies that $\Gamma^{\prime}(X,k)$ is actually  generated by the first $(d-1)$ elements $\{q^{-1} \alpha_i^2 \mid 1 \le d-1\}$
 and we are done.
\end{proof}

\section{Ranks of non-simple abelian varieties}
\label{nonsimple}

The following assertion was proven in \cite[pp. 273, 280--281]{ZarhinEssen}.

\begin{thm}
\label{essen212}
Let $X$ and $Y$ be non-supersingular simple abelian varieties over $k$. If
$$\rk(X\times Y)=\rk(X)+\rk(Y)-1$$
then there exists an imaginary quadratic field $B$ enjoying the following properties.

\begin{itemize}
\item[0)]
$p$ splits in $B$;
\item[1)]
The number fields $E_X=\Q[\Fr_{X,k}]$ and $E_Y=\Q[\Fr_{Y,k}]$ contain subfields isomorphic to $B$;

\item[2)]
$\Norm_{E_X/B}(q^{-1}\Fr_{X,k}^2)$ and $\Norm_{E_Y/B}(q^{-1}\Fr_{Y,k}^2)$ are not roots of unity.
\end{itemize}
\end{thm}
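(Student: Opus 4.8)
The plan is to translate the rank identity into the statement that $\Gamma'(X,k)$ and $\Gamma'(Y,k)$ share a subgroup of rank one, to produce $B$ from that subgroup, and then to extract (0), (1) and (2) from Corollary \ref{rankquad} and the structure of $\Gamma'(X,k)$ as a Galois module. First I reduce to the case where $k$ is sufficiently large: the ranks $\rk(X)$, $\rk(Y)$, $\rk(X\times Y)$ are independent of the field of definition, and replacing $k$ by a finite extension $\kappa$ replaces $\Fr$ by a power of itself, hence replaces $q^{-1}\Fr^2$ and $\Norm_{E/B}(q^{-1}\Fr^2)$ by their powers, leaves the fields $E_X=\Q[\Fr_X]$, $E_Y$ (and the splitting fields of $\P_X$, $\P_Y$) unchanged, and keeps $X$, $Y$ simple (Remark \ref{multiplicities}); so the whole statement for $(X,Y,k)$ follows from its analogue over a sufficiently large $\kappa$, and I may assume $\Gamma'(X,k)$, $\Gamma'(Y,k)$ and their intersection are torsion-free. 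Since $R_{X\times Y}=R_X\cup R_Y$, inside $L^{*}$ (with $L=L_XL_Y$) one has $\Gamma'(X\times Y,k)=\Gamma'(X,k)\cdot\Gamma'(Y,k)$, and the exact sequence $0\to\Gamma'(X,k)\cap\Gamma'(Y,k)\to\Gamma'(X,k)\oplus\Gamma'(Y,k)\to\Gamma'(X,k)\cdot\Gamma'(Y,k)\to 0$ gives, after $\otimes\Q$, $\rk(\Gamma'(X,k)\cap\Gamma'(Y,k))=\rk(X)+\rk(Y)-\rk(X\times Y)=1$. Put $\Delta:=\Gamma'(X,k)\cap\Gamma'(Y,k)$; it is a $\Gal(L/\Q)$-stable free group of rank $1$, say $\Delta=\langle\gamma\rangle$. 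Every $\sigma\in\Gal(L/\Q)$ sends $\gamma$ to a generator of $\Delta$, so $\gamma^{\sigma}=\gamma^{\pm1}$; since $\gamma$ is non-torsion, $\gamma\notin\Q$, so its stabilizer has index $2$ and $B:=\Q(\gamma)$ is quadratic, and it is imaginary because $\gamma$ has all archimedean absolute values equal to $1$ (a real $B$ would force $\gamma=\pm1$). Note that $B\subset L_X$ and $B\subset L_Y$.

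Condition (0) is then immediate: $\Delta\subset B\cap\Gamma'(X,k)=\Gamma'(X,k)_B$ is infinite, so Corollary \ref{rankquad} shows that $p$ splits in $B$ and $\rk(\Gamma'(X,k)_B)=1$; the same applies with $Y$. For (1) and (2) I use the $\Gal(L_X/\Q)$-equivariant surjection $\Z[\Hom(E_X,L_X)]\to\Gamma'(X,k)$, $[\phi]\mapsto q^{-1}\phi(\Fr_X)^2$ (surjective because, $X$ being simple, $R_X=\{\phi(\Fr_X):\phi\in\Hom(E_X,\C)\}$ by Example \ref{simpleCM}). Writing $G_X=\Gal(L_X/\Q)$ and $H_X=\Gal(L_X/E_X)$, and using that $\Q[G_X]$ is semisimple, $\Gamma'(X,k)\otimes\Q$ is both a quotient and a submodule of the permutation module $\Q[\Hom(E_X,L_X)]\cong\mathrm{Ind}_{H_X}^{G_X}\mathbf{1}$. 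Inside it the rank-one submodule $\Delta\otimes\Q$ is isomorphic to the quadratic character $\chi_B$ of $G_X$ with fixed field $B$, so $\chi_B$ is an irreducible constituent of $\mathrm{Ind}_{H_X}^{G_X}\mathbf{1}$; by Frobenius reciprocity this forces $\chi_B|_{H_X}=\mathbf{1}$, i.e.\ $H_X\subset\Gal(L_X/B)$, i.e.\ $B\subset E_X$. Applying the same to $Y$ gives condition (1).

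For (2), since $B\subset E_X$ and $L_X$ contains the Galois closure of $E_X/B$, the element $\beta:=\Norm_{E_X/B}(q^{-1}\Fr_X^2)=\prod_{\phi\in\Hom_B(E_X,L_X)}q^{-1}\phi(\Fr_X)^2$ lies in $B\cap\Gamma'(X,k)$. Taking $\Gal(L_X/B)$-invariants is exact on $\Q[G_X]$-modules, so $(B\cap\Gamma'(X,k))\otimes\Q=(\Gamma'(X,k)\otimes\Q)^{\Gal(L_X/B)}$ is the image of $(\Q[\Hom(E_X,L_X)])^{\Gal(L_X/B)}$; the latter is spanned by the two orbit-sums over the two fibres of the restriction map $\Hom(E_X,L_X)\to\Hom(B,L_X)$ (there are exactly two fibres, each a single $\Gal(L_X/B)$-orbit because $B\subset E_X$ and $L_X$ is the Galois closure), and these orbit-sums map, under $[\phi]\mapsto q^{-1}\phi(\Fr_X)^2$, to $\beta$ and $\beta^{-1}$ — here one uses that $E_X$ is a CM field, so $L_X$ carries a single central complex conjugation satisfying $\bar\alpha=q/\alpha$ on $R_X$. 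Hence $(B\cap\Gamma'(X,k))\otimes\Q=\Q\cdot\beta$, and this is nonzero since it contains $\Delta\otimes\Q$; therefore $\beta$ is non-torsion, i.e.\ not a root of unity, and symmetrically for $Y$, which is (2).

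I expect the main obstacle to be exactly this passage from the common element $\gamma\in L$ — which a priori knows nothing about $E_X$ — to the assertions (1) and (2), which concern $E_X$ itself: one must realise $\Gamma'(X,k)$ as a Galois-module constituent of the permutation module on the embeddings of $E_X$ and combine this with Frobenius reciprocity (for (1)), and compute the $\Gal(L_X/B)$-invariants precisely, where the CM structure of $E_X$ — a single central complex conjugation — is essential (for (2)). The initial base-change reduction, the extraction of $\gamma$, and the deduction of (0) are by comparison routine.
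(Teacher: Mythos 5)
Your core argument is correct, and it is necessarily a different route from the paper's, since the paper gives no self-contained proof of Theorem \ref{essen212}: it only cites \cite[pp.\ 273, 280--281]{ZarhinEssen} (adding the parenthetical remark that assertion 0 follows from assertion 2 plus Corollary \ref{rankquad}, which is essentially how you obtain (0) as well). Your reconstruction — the rank hypothesis forces $\rk\bigl(\Gamma^{\prime}(X,k)\cap\Gamma^{\prime}(Y,k)\bigr)=1$, the Galois-stable rank-one intersection yields the imaginary quadratic field $B=\Q(\gamma)$ inside both splitting fields, Corollary \ref{rankquad} gives (0), Frobenius reciprocity applied to the quadratic character of $B$ inside the permutation module $\Q[\Hom(E_X,L_X)]$ gives $B\subset E_X$ for (1), and the computation of $\Gal(L_X/B)$-invariants via the two fibres of restriction to $B$ identifies $(B\cap\Gamma^{\prime}(X,k))\otimes\Q$ with the line spanned by $\Norm_{E_X/B}(q^{-1}\Fr_X^2)$, whence (2) — is sound. (A small simplification: the CM structure of $E_X$ is not needed for the second orbit sum; complex conjugation of $L_X\subset\C$ interchanges the two fibres and sends $\beta=\Norm_{E_X/B}(q^{-1}\Fr_X^2)$ to $\beta^{-1}$ simply because $\beta\in B$ lies in $\Gamma^{\prime}(X,k)$ and hence has archimedean absolute value $1$.)

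The one genuine defect is the justification of your opening reduction to sufficiently large $k$. The claims that base change leaves $E_X=\Q[\Fr_X]$ and $L_X$ unchanged, and that $X$, $Y$ stay simple by Remark \ref{multiplicities}, are unjustified and false in general: Remark \ref{multiplicities} requires that no ratio of distinct eigenvalues be a root of unity, which is exactly the property you are trying to arrange by enlarging $k$; for instance, a simple ordinary abelian surface obtained by Weil restriction along a quadratic extension splits over that extension, and $\Q[\Fr_X^{2}]$ is then strictly smaller than $\Q[\Fr_X]$. The repair is routine but must be made explicit. One always has $E_{X_{\kappa}}=\Q[\Fr_{X_{\kappa}}]\subseteq E_X$, still a field, and $R_{X_{\kappa}}$ is still the set of images of $\Fr_{X_{\kappa}}$ under the embeddings of $E_{X_{\kappa}}$ (because $X$ is simple over the original $k$); this is all your main argument actually uses, so simplicity of $X_{\kappa}$ is never needed. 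The conclusions then transfer back: (1) because $B$ embeds into $E_{X_{\kappa}}\subseteq E_X$, and (2) because transitivity of norms gives $\Norm_{E_X/B}(q^{-1}\Fr_X^2)^{d}=\Norm_{E_{X_{\kappa}}/B}\bigl(q^{-d}\Fr_{X_{\kappa}}^2\bigr)^{[E_X:E_{X_{\kappa}}]}$ with $d=[\kappa:k]$, so the right-hand side not being a root of unity forces the left-hand factor not to be one. Finally, note that your identification $(B\cap\Gamma^{\prime}(X,k))\otimes\Q=(\Gamma^{\prime}(X,k)\otimes\Q)^{\Gal(L_X/B)}$ uses torsion-freeness of $\Gamma(X,k)$, i.e.\ exactly the sufficient largeness you arranged; it is worth saying so.
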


\begin{rem}
There is a typo in the displayed formula for ranks in \cite[Th. 2.12]{ZarhinEssen}, see Sect. \ref{corr}. It was also erroneously claimed (without a proof) in \cite[Th. 2.12]{ZarhinEssen} that the conditions 0,1,2 are equivalent to the formula $\rk(X\times Y)=\rk(X)+\rk(Y)-1$. Actually, the conditions 0,1,2  imply only the inequality $\rk(X\times Y)\le \rk(X)+\rk(Y)-1$.
\end{rem}

\begin{proof}[Proof of Theorem \ref{essen212}]
Assertions 1 and 2 are proven in \cite[pp. 280--281]{ZarhinEssen}. Assertion 0 is proven in \cite[Remark 1.1.5 on p. 273]{ZarhinEssen}. (It also follows from Assertion 2 combined with Lemma \ref{rankquad}).

\end{proof}

\begin{cor}[Theorem 2.11 of \cite{ZarhinEssen}]
\label{essen211}
Assume that $E=\End^0(X)$ is a number field. Let $Y$ be an ordinary elliptic curve over $k$. The equality $\rk(\Gamma(X\times Y))=\rk(\Gamma(X))$ holds true if and only if $\End^0 X$ contains an imaginary quadratic subfield isomorphic to $B=\End^0 Y$ and $\Norm_{E/B}(q^{-1} \Fr_{X,k}^2)$ is not a root of unity.
\end{cor}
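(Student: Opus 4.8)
The plan is to translate the assertion into one about $\rk(X\times Y)$ and $\rk(X)$ and then deduce it from Theorem~\ref{essen212} together with the elementary bounds on ranks. Since $\rk(\Gamma(Z,k))=\rk(Z)+1$ for every abelian variety $Z$, the equality $\rk(\Gamma(X\times Y))=\rk(\Gamma(X))$ is equivalent to $\rk(X\times Y)=\rk(X)$. Being an ordinary elliptic curve, $Y$ is simple and non-supersingular, so $\rk(Y)=1$; moreover $B=\End^0(Y)=\Q[\Fr_Y]$ is an imaginary quadratic field in which $p$ splits (a classical fact about ordinary elliptic curves), and $\beta:=q^{-1}\Fr_Y^2$ has infinite order (since $\rk(Y)=1$ forces $R_Y^{\prime}=\{\beta,\beta^{-1}\}$ with $\beta$ not a root of unity). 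As $E=\End^0(X)$ is a field it is a division algebra, so $X$ is simple and $E=\Q[\Fr_X]$ by Tate's theorem. Finally $\rk(X)\le\rk(X\times Y)\le\rk(X)+\rk(Y)=\rk(X)+1$, so the left-hand condition of the corollary is equivalent to $\rk(X\times Y)=\rk(X)+\rk(Y)-1$, and it remains to show that \emph{this} equality holds if and only if $E$ contains an imaginary quadratic subfield isomorphic to $B$ for which $\Norm_{E/B}(q^{-1}\Fr_X^2)$ is not a root of unity.

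For the ``only if'' direction I would argue as follows. Assuming $\rk(X\times Y)=\rk(X)$, one first notes that $X$ is not supersingular, for otherwise $\rk(X\times Y)\ge\rk(Y)=1>0=\rk(X)$. Thus $X$ and $Y$ are non-supersingular simple abelian varieties with $\rk(X\times Y)=\rk(X)+\rk(Y)-1$, and Theorem~\ref{essen212} yields an imaginary quadratic field $B'$ in which $p$ splits, together with embeddings of $B'$ into $E_X=E$ and into $E_Y=\End^0(Y)=B$, such that $\Norm_{E_X/B'}(q^{-1}\Fr_X^2)$ and $\Norm_{E_Y/B'}(q^{-1}\Fr_Y^2)$ are not roots of unity. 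Since $[B:\Q]=2=[B':\Q]$, the embedding $B'\hookrightarrow B$ is an isomorphism; hence $E$ contains an imaginary quadratic subfield isomorphic to $B$, and the first non-vanishing is exactly the statement that $\Norm_{E/B}(q^{-1}\Fr_X^2)$ is not a root of unity.

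For the ``if'' direction, suppose $E$ contains an imaginary quadratic subfield $B_0$ isomorphic to $B$ and that $N:=\Norm_{E/B_0}(q^{-1}\Fr_X^2)$ is not a root of unity. Fixing an embedding $L_{X\times Y}\hookrightarrow\C$, one observes that two isomorphic imaginary quadratic subfields of $\C$ coincide, so $B_0=B$ inside $\C$. Writing $N$ as the product of the images of $q^{-1}\Fr_X^2$ under the embeddings $E\hookrightarrow\C$ that fix $B$, we see $N$ is a product of elements of $R_X^{\prime}$, hence $N\in B\cap\Gamma^{\prime}(X,k)$; it has archimedean absolute value $1$, and by Remark~\ref{nonPP} it is a $\B$-adic unit at every maximal ideal $\B$ of $B$ not above $p$. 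In the notation of Lemma~\ref{elementary}, $N$ thus lies in $U_T^1$, where $T$ is the set of primes of $B$ above $p$, and $U_T^1$ is infinite since $N$ has infinite order; hence $\rk(U_T^1)=1$ by Lemma~\ref{elementary}. Similarly $\beta=q^{-1}\Fr_Y^2\in B\cap\Gamma^{\prime}(Y,k)\subseteq U_T^1$. As $N$ and $\beta$ are both of infinite order, there are nonzero integers $a,b$ and a root of unity $\zeta$ (of some order $m$) with $\beta^b N^{-a}=\zeta$, whence $\beta^{bm}=N^{am}\in\Gamma^{\prime}(X,k)$. Finally $R_{X\times Y}^{\prime}=R_X^{\prime}\cup R_Y^{\prime}$ gives $\Gamma^{\prime}(X\times Y,k)=\langle\Gamma^{\prime}(X,k),\beta\rangle$, and because the nonzero power $\beta^{bm}$ of $\beta$ already lies in $\Gamma^{\prime}(X,k)$ we obtain $\rk(X\times Y)=\rk(\Gamma^{\prime}(X\times Y,k))=\rk(\Gamma^{\prime}(X,k))=\rk(X)$, as desired.

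The serious input is Theorem~\ref{essen212}, which we are granted; the remaining work is bookkeeping. I expect the fussiest points to lie in the ``if'' direction: identifying the abstractly prescribed subfield $B_0\subseteq E$ with the concrete field $B=\End^0(Y)\subseteq\C$, and carrying the harmless roots of unity through the final rank computation without losing track of which group they live in. The hypothesis ``$Y$ ordinary'' is used in several places — to ensure $\rk(Y)=1$, that $B=\End^0(Y)$ is imaginary quadratic with $p$ split in it, and that $\beta$ has infinite order — and it is worth checking explicitly that each such use is legitimate.
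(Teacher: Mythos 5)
Your proof is correct and follows essentially the same route as the paper: the ``only if'' direction is reduced to Theorem~\ref{essen212} (after noting $\rk(Y)=1$ and ruling out supersingular $X$), and the ``if'' direction uses Lemma~\ref{elementary} to see that $\Norm_{E/B}(q^{-1}\Fr_X^2)$ and $q^{-1}\Fr_Y^2$ lie in the rank-one group $U_T^1$ and are therefore multiplicatively dependent. The only cosmetic difference is at the end, where you conclude via a power of $\beta$ lying in $\Gamma^{\prime}(X,k)$ while the paper concludes via the infinitude of $\Gamma^{\prime}(X,k)\cap\Gamma^{\prime}(Y,k)$; these are the same argument in substance.
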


\begin{proof}
Since $\rk(Y)=1$, we have
$$\rk(X)=\rk(X)+\rk(Y)-1.$$
This implies that in one direction (if we are given that $\rk(\Gamma(X\times Y))=\rk(\Gamma(X))$, i.e., $\rk(X\times Y)=\rk(X)$) then our assertion follows from Theorem \ref{essen212}.  Conversely, suppose that $B=\End^0 Y$ is isomorphic to a subfield of $E$ and $$\gamma:=\Norm_{E/B}(q^{-1} \Fr_{X,k}^2)\in B$$ is {\sl not} a root of unity. Let us fix an embedding $E\subset\C$. We have
$$\gamma \in B \subset E \subset L_X\subset \C.$$
 By definition, $\gamma$ is a product of elements of $R_X^{\prime}$ and therefore lies in $\Gamma^{\prime}(X,k)$. In particular,
in the notation of Lemma \ref{rankquad}, $\gamma \in \Gamma^{\prime}(X,k)_B$.
On the other hand, $q^{-1}\Fr_{Y,k}^2\subset B$ is also {\sl not} a root of unity; in addition,  it generates $\Gamma^{\prime}(Y,k)$.  Notice that (in the notation of Lemma \ref{elementary}) both $\gamma$ and $q^{-1}\Fr^2_{Y,k}$ lie in $U_T^{1}$; in particular, $U_T^{1}$ is infinite. By Lemma \ref{elementary}, $U_T^{1}$  has rank $1$ and therefore the intersection  of two cyclic (sub)groups generated by $\gamma$ and $q^{-1}\Fr^2_{Y,k}$ respectively is an infinite cyclic group. This implies that the intersection of finitely generated groups $\Gamma^{\prime}(X,k)$ and $\Gamma^{\prime}(Y,k)$ is an infinite group. It follows that the rank of  $\Gamma^{\prime}(X\times Y,k)=\Gamma^{\prime}(X,k)\Gamma^{\prime}(Y,k)$ is strictly less than the sum
$$\rk(\Gamma^{\prime}(X,k))+\rk(\Gamma^{\prime}(Y,k))=\rk(\Gamma^{\prime}(X,k))+1.$$
In other words, $\rk(X\times Y)< \rk(X)+1$, i.e., $\rk(X\times Y)\le \rk(X)$. It follows that $\rk(X\times Y)=\rk(X)$ and we are done.
\end{proof}

\section{Newton polygons}
\label{newton}
In order to define the Newton polygon of $X$, let us consider the ring $\Oc_L$ of integers in $L$ and pick a maximal ideal $\PP$ in $\Oc_L$ such that the residue field $\Oc_L/\PP$ has characteristic $p$. The set $S_p$ of such ideals constitutes a $\Gal(L/\Q)$-orbit. Let
$$\ord_{\PP}: L^{*} \to \Q$$
be the discrete valuation map that corresponds to $\PP$ and normalized by the condition
$$\ord_{\PP}(q)=1.$$
Then the set
$$\Sl_X=\ord_{\PP}(\RR_X) \subset \Q$$
is called the {\sl set of slopes} of $X$. For each $c\in \Sl_X$ we write
$$\length(c)=\length_X(c)$$
for the number of roots $\alpha$ of $\P_X(t)$ (with multiplicities) such that
$$\ord_{\PP}(\alpha)=c.$$
By definition
$$\sum_{c \in \Sl_X} \length(c)=\deg(\P_X)=2\dim(X). \eqno(6)$$

\begin{rem}
\label{fraction}
It is well known that all slopes $c\in \Sl_X$ are rational numbers that lie between $0$ and $1$.  In addition, if $c$ is a slope then $1-c$ is also a slope and $\length(c)=\length(1-c)$. In addition, if $1/2$ is a slope then its length is even.
Notice also that the rational number $c$ can be presented as a fraction, whose denominator is a positive integer that does not exceed $2\dim(X)$ \cite[p. 173]{ZarhinInv79}.

Since $\P(t)$ has rational coefficients and $\Gal(L/\Q)$ acts transitively on $S_p$, the set $\Sl_X$ and the function
$$\length_X: \Sl_p \to \NN$$
do not depend on a choice of $\PP$. The {\sl integrality property} of the Newton polygon \cite[Sect. 9 and 21]{OortG} means that $c \cdot \length_X(c)$ is a positive integer for each nonzero slope $c$. Suppose that a slope $c \ne 1/2$ is presented as the fraction in lowest terms, whose denominator is greater than $\dim(X)$. Then $\length(c)>\dim(X)$
and
$$\length(1-c)=\length(c)>\dim(X),$$
which implies $\length(c)+\length(1-c)>2\dim(X)$. This contradicts  to (6). So, each slope $c \ne 1/2$ can be presented as a fraction, whose denominator does not exceed $\dim(X)$. It is also clear, that if the denominator of $c$ in lowest terms is exactly $\dim(X)$ then
$$\length(c)=\dim(X)=\length(1-c)$$
and $\Sl_X=\{c,1-c\}$.
\end{rem}

\begin{rem}
\label{Slopes}
Suppose that $X$ is simple. Then as we have seen, $\P_X(t)=\P_{X, \min}(t)^e$. It follows that $e$ divides $\length_X(c)$ for every slope $c$ of the Newton polygon of $X$.
\end{rem}

\begin{defn}
\label{NewtonType}
An abelian variety $X$ is called {\sl ordinary} if $\Sl_X=\{0,1\}$; it is called {\sl supersingular} if $\Sl_X=\{1/2\}$. It is well known that $X$ is supersingular if and only if $R_X^{\prime}$ consists of roots of unity, i.e., $q^{-1}\alpha^2$ is a root of unity for all $\alpha\in R_X$.
 (By the way, it follows immediately from Proposition 3.1.5 in \cite[p. 172]{ZarhinInv79}.)

 $X$ is called of {\sl K3 type} \cite{ZarhinK3} if
$\Sl_X$ is either $\{0,1/2, 1\}$ or $\{0,1\}$ while (in both cases) $\length_X(0)=\length_X(1)=1$.
It is called {\sl almost ordinary} \cite{LenstraZarhin} if
$$\Sl_X=\{0,1/2, 1\}, \ \length_X(1/2)=2.$$
\end{defn}

\begin{rem}
\label{bigrank}
Clearly, $X$ is supersingular if and only if $\rk(X)=0$. If $X$ is a simple abelian variety of K3 type then
$\End^0(X)$ is a field and $\rk(X)=\dim(X)$ \cite{ZarhinK3}.
It is  known (\cite[Th. 7.2  on p. 553]{W}) that if $X$ is a simple ordinary abelian variety then  $\End^0(X)$ is a field
and all endomorphisms of $X$ are defined over $k$. In particular,
  $X$ is absolutely simple.
If $X$ is a simple almost ordinary then $\End^0(X)$ is a field \cite{OortLift}. It is also known that for such $X$ we have
$\rk(X)=\dim(X)$ or $\dim(X)-1$; if, in addition, $\dim(X)$ is {\sl even} then $\rk(X)=\dim(X)$ \cite{LenstraZarhin}.
\end{rem}

\begin{thm}
\label{sloperank}
Let $X$ be a simple abelian variety of positive dimension over $k$ and suppose that $\P_X(t)$ is irreducible. Suppose that there exists a rational number $c \ne 1/2$ such that $\Sl_X=\{c, 1-c\}$. (E.g., $X$ is ordinary.)  If $\rk(X)=\dim(X)-1$ then $\dim(X)$ is even.
\end{thm}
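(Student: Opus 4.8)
The plan is to work with the set $R_X$ of eigenvalues, which — since $\P_X(t)$ is irreducible — forms a single $\Gal(L/\Q)$-orbit, and to exploit the constraint imposed by the two-slope hypothesis. Write $2n = \deg(\P_X) = 2\dim(X)$, so $\dim(X) = n$, and note that because $\P_X$ is irreducible we have $e = 1$ in the notation of Example \ref{simpleCM}, so $\P_X = \P_{X,\min}$ is separable and $E = \Q[\Fr_X]$ is a CM-field of degree $2n$. Since $\Sl_X = \{c, 1-c\}$ with $c \ne 1/2$, Remark \ref{fraction} gives $\length_X(c) = \length_X(1-c) = n$. Pick $\PP \in S_p$ and set $\ord = \ord_{\PP}$ normalized by $\ord(q) = 1$. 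Then $R_X$ splits into two halves: $R_X^0 = \{\alpha \in R_X : \ord(\alpha) = c\}$ and $R_X^1 = \{\alpha : \ord(\alpha) = 1-c\}$, each of size $n$, and complex conjugation $\alpha \mapsto q/\alpha$ swaps the two halves (since $\ord(q/\alpha) = 1 - \ord(\alpha)$) — in particular $R_{X,\ssL}$ is empty, so we are in the situation of Example \ref{nosuper}.

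Next I would package the rank hypothesis. By Example \ref{nosuper}, choosing $\alpha_1,\dots,\alpha_n$ so that $R_X = \{\alpha_1,\dots,\alpha_n; q/\alpha_1,\dots,q/\alpha_n\}$, the group $\Gamma^{\prime}(X,k)$ is generated by $\beta_i = q^{-1}\alpha_i^2$ together with their inverses, and $\rk(X) = \rk(\Gamma^{\prime})$. The hypothesis $\rk(X) = n - 1$ means there is (up to sign and scaling) a nontrivial relation among the $\beta_i$; combined with the involution $\beta \mapsto \beta^{-1}$ of $R_X^{\prime}$, the rank-deficiency should force a relation that is symmetric under this involution. Concretely, I expect to produce a multiplicative relation $\prod_{\alpha \in R_X}\alpha^{m(\alpha)} = q^M$ with exponents $m(\alpha)$ not all equal on conjugate pairs but whose induced function on $R_X^{\prime}$ is invariant under $\beta \mapsto \beta^{-1}$; applying $\ord$ to this relation and using that $\ord$ takes only the two values $c$ and $1-c$ turns it into a linear equation over $\Z$ relating $c$, $1-c$, the number $a$ of eigenvalues in $R_X^0$ appearing with a given net exponent, and $M$. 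Because $c$ is a fixed rational number strictly between $0$ and $1$ and $c \ne 1/2$, this linear equation pins down the relevant exponent sums.

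The key mechanism is that the Galois group $\Gal(L/\Q)$ acts transitively on $R_X$ (irreducibility) and also transitively on $S_p$; so for any relation with exponent function $e$, averaging or applying a Galois element that moves $\PP$ shows $\ord_{\PP'}$ applied to the relation gives the same integer $M$ for every $\PP' \in S_p$, which is a strong rigidity constraint. I would feed the symmetric relation through all valuations $\ord_{\PP'}$ simultaneously: the total exponent mass on slope-$c$ eigenvalues equals that on slope-$(1-c)$ eigenvalues by the conjugation symmetry, and counting multiplicities against $\length_X(c) = \length_X(1-c) = n$ yields a parity statement — the relation is ``balanced'' across the two halves in a way that is only consistent when $n$ is even. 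Essentially, a nontrivial involution-symmetric relation among $n$ elements $\beta_1,\dots,\beta_n$ of $R_X^{\prime}$ that pairs each $\beta_i$ against some $\beta_j^{\pm 1}$ via the Galois/valuation structure forces $n$ to be even because the non-fixed elements come in pairs and (since $c \ne 1/2$ rules out any $\beta_i = 1$, i.e. no supersingular eigenvalue) there are no fixed elements to absorb an odd leftover.

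The main obstacle I anticipate is making precise the claim that rank deficiency by exactly one forces the existence of an involution-symmetric nontrivial relation, rather than merely some nontrivial relation: a priori a relation among $\beta_1,\dots,\beta_n$ need not respect $\beta \mapsto \beta^{-1}$. Overcoming this should use that $R_X^{\prime} = \{\beta_1,\dots,\beta_n,\beta_1^{-1},\dots,\beta_n^{-1}\}$ is a single object stable under both $\Gal(L/\Q)$ and the involution, so the lattice of relations on $R_X^{\prime}$ is a module over the group generated by these; when this relation lattice has corank $n+1$ inside $\Z^{R_X^{\prime}}$ (equivalently $\rk\Gamma^{\prime} = n-1$ in the $2n$-element presentation, accounting for the $n$ obvious relations $\beta_i \cdot \beta_i^{-1} = 1$), an averaging argument over the involution produces a genuine symmetric relation, and then the valuation computation above closes the proof. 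I would also need to double-check the edge cases $n = 1$ (where $\rk(X) = 0$ would make $X$ supersingular, contradicting $\Sl_X = \{c,1-c\}$ with $c \ne 1/2$) to confirm the statement is vacuous or consistent there.
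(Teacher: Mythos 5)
Your setup (splitting $R_X$ into the two slope classes, observing that $R_{X,\ssL}$ is empty, and planning to apply $\ord_{\PP}$ to a multiplicative relation among the $\beta_i=q^{-1}\alpha_i^2$) is the same frame the paper uses, but the proposal is missing the one ingredient that makes the valuation computation yield a parity statement, and the device you propose to replace it cannot work. The hypothesis $\rk(X)=\dim(X)-1$ only gives you \emph{some} nontrivial relation $\prod_{i=1}^{g}\beta_i^{e_i}=\text{root of unity}$ with integer exponents $e_i$. Applying $\ord_{\PP}$, where each $\ord_{\PP}(\beta_i)=\pm(2c-1)$, gives $\sum_i \pm e_i=0$, and this carries no parity information about $g$ unless the exponents are controlled: for instance $g=3$ with exponents $(1,1,-2)$ satisfies such a signed-sum identity for suitable signs, so nothing is contradicted. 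The paper closes exactly this gap by quoting Theorem 3.6(b) of \cite{LenstraZarhin}, which asserts that under the rank hypothesis one can choose $\alpha_1,\dots,\alpha_g\in R_X$ and exponents $n_i\in\{\pm 1\}$ — exactly $g$ factors, each with exponent $\pm 1$ — so that $\prod_i(\alpha_i^2/q)^{n_i}$ is a root of unity; only then does $\sum_{i=1}^{g}(\pm 1)=0$ force $g$ even. You neither cite such a result nor sketch a proof of it, and it is not a formal consequence of rank deficiency; it is the substantive arithmetic input.

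Moreover, the mechanism you propose to manufacture a usable relation — averaging a relation with its image under the involution $\beta\mapsto\beta^{-1}$ of $R_X^{\prime}$ — produces only trivial information. If $e^{\prime}:R_X^{\prime}\to\Z$ satisfies $e^{\prime}(\beta)=e^{\prime}(1/\beta)$, then $\prod_{\beta\in R_X^{\prime}}\beta^{e^{\prime}(\beta)}=\prod\left(\beta\cdot\beta^{-1}\right)^{e^{\prime}(\beta)}=1$ holds automatically (these are precisely the relations that \emph{every} abelian variety satisfies, cf.\ the definition of neatness), so applying $\ord_{\PP}$ to a symmetrized relation gives $0=0$ and no constraint on $\dim(X)$. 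In short: the valuation endgame is right, but the heart of the proof is the existence of a $\pm 1$-exponent relation of length exactly $g$, which your proposal neither supplies nor replaces.
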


\begin{proof} Let us put $g=\dim(X)$ and
$$c^{\prime}=2c-1=-[2(1-c)-1].$$
Clearly, $c^{\prime}\ne 0$ and for all $\alpha\in R_X$ the rational number
$\ord_{\PP}(\alpha^2/q)$ is either $c^{\prime}$ or $-c^{\prime}$ . Let us define $m(\alpha)$ by
$$\ord_{\PP}(\alpha^2/q)=m(\alpha) c^{\prime}.$$
Clearly, $m(\alpha)=1$ or $-1$.
By Theorem 3.6(b) of \cite{LenstraZarhin} there  exist $\alpha_1, \dots , \alpha_g \in R_X$ and integers $n_1, \dots , n_g$ such that every $n_i$ is either $1$ or $-1$ and
$\gamma=\prod_{i=1}^g (\alpha_i^2/q)^{n_i}$ is a root of unity. Pick $\PP \in S_p$. We have
$$0=\ord_{\PP}(\gamma)=\sum_{i=1}^g n_i \ord_{\PP}\left(\alpha_i^2/q\right)=\sum_{i=1}^g n_i m(\alpha_i)  c^{\prime}=\left[\sum_{i=1}^g (\pm 1)\right] c^{\prime}.$$
It follows that for a certain choice of signs $\sum_{i=1}^g (\pm 1)=0$ and therefore $g$ is even.
\end{proof}

\begin{cor}
\label{rightNP}
Suppose that $X$ is a simple abelian variety over $k$. Assume that $1 \le \dim(X)\le 3$ and $k$ is sufficiently large with respect to $X$. If $X$ is not neat then it is almost ordinary and $\dim(X)=3$.
\end{cor}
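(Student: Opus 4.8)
The plan is to work through the possible dimensions $1,2,3$ and, in each case, use the structure theory from Section \ref{neat} together with Theorem \ref{sloperank} to squeeze out the conclusion. First I would dispose of the low-dimensional cases. If $\dim(X)=1$, then $X$ is an elliptic curve; either it is supersingular (hence neat by Example \ref{supersingular}), or it is ordinary with $\rk(X)=1=\dim(X)$, and then by Example \ref{nosuper} (applied with $d=1$, noting $R_{X,\ssL}$ is empty since $\alpha\ne q/\alpha$) $X$ is neat. If $\dim(X)=2$, then $X$ is simple and, being sufficiently large, absolutely simple by Remark \ref{multiplicities}. By Remark \ref{Slopes}, $e$ divides $\length_X(c)$ for every slope; since $\deg(\P_X)=4$ and $\P_X=\P_{X,\min}^e$, the only options are $e=1$ (so $E=\End^0(X)$ is a quartic CM-field) or $e=2$ (so $X$ is supersingular with $\Sl_X=\{1/2\}$, hence neat by Example \ref{supersingular}). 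In the quartic CM case $\dim(X)=2$ is even, so by Remark \ref{bigrank}'s discussion and, more to the point, by Theorem \ref{sloperank}: the Newton polygon has slope set $\{0,1\}$ (ordinary) or $\{0,1/2,1\}$ with $\length(1/2)=2$ (almost ordinary) — in the ordinary case $\rk(X)=\dim(X)-1=1$ would force $\dim(X)$ even, which it is, so this does not immediately give a contradiction; instead I invoke Example \ref{mixed}/\ref{nosuper} to see that neatness is equivalent to $\rk(X)=d=2$. So I need to rule out $\rk(X)=1$ for a simple abelian surface. This is exactly where the "non-existence result for a certain class of simple abelian surfaces" promised for Section \ref{dim2} is needed; assuming that result (which shows no simple abelian surface over a sufficiently large $k$ has $\rk(X)=1$), the surface $X$ is neat.

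For $\dim(X)=3$, the same divisibility argument (Remark \ref{Slopes}) applied to $\deg(\P_X)=6=e\deg(\P_{X,\min})$ gives $e\in\{1,2,3,6\}$. If $e=6$ or $e=3$ then $X$ is supersingular (all slopes must be $1/2$ since the length of each slope is a multiple of $e$ and slopes come in pairs $c,1-c$ of equal length), hence neat by Example \ref{supersingular}. If $e=2$, then $\P_{X,\min}$ is a cubic; since $E=\Q[\Fr_X]$ is a CM-field it must have even degree, contradiction — so $e=2$ cannot occur for simple $X$ of dimension $3$ (I should double-check: actually $e=2$ would force $\deg\P_{X,\min}=3$, impossible for a CM-field, so indeed this case is empty). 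Hence $e=1$, i.e. $E=\End^0(X)$ is a sextic CM-field generated by $\Fr_X$, and by Example \ref{simpleCM} $R_X$ consists of $6$ distinct elements $\{\alpha_1,\alpha_2,\alpha_3;q/\alpha_1,q/\alpha_2,q/\alpha_3\}$ with $R_{X,\ssL}$ empty (since $\alpha\ne q/\alpha$). By Example \ref{nosuper}, $X$ is neat iff $\rk(X)=3=\dim(X)$; if $X$ is not neat then $\rk(X)\le 2$.

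Now I bring in the Newton polygon constraint. Suppose $X$ is not neat, so $\rk(X)\in\{0,1,2\}$. If $\rk(X)=0$ then $X$ is supersingular, hence neat — contradiction. If $\rk(X)=1$ then $X$ is simple of rank $1$, and by Example \ref{rank1} $X$ is neat — contradiction. So $\rk(X)=2=\dim(X)-1$. I now want to conclude $X$ is almost ordinary. By Remark \ref{fraction}, the slopes of $X$ lie in $\{0,1/3,1/2,2/3,1\}$, and since $R_X$ has $6$ distinct elements all roots of $\P_X$ are simple, so $\length_X(c)\le$ the number of slopes available. If $1/2\notin\Sl_X$, then $\Sl_X\subseteq\{0,1/3,2/3,1\}$; but $1/3,2/3$ can only occur together with equal lengths that are multiples of $3$ (by the integrality/length-pairing properties), forcing $\Sl_X=\{1/3,2/3\}$ with each length $3$ — but then, since $X$ is K3-type-like only if $\length(0)=\length(1)=1$, and here there is no slope $0$ at all — this is a valid Newton polygon, so I must handle it. In the case $\Sl_X=\{1/3,2/3\}$ (or $\Sl_X=\{0,1\}$ ordinary), there is a rational number $c\ne 1/2$ with $\Sl_X=\{c,1-c\}$, so Theorem \ref{sloperank} applies: $\rk(X)=\dim(X)-1=2$ would force $\dim(X)=3$ to be even, which is false — contradiction. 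The remaining possibility with $1/2\notin\Sl_X$ and $\Sl_X$ not of the form $\{c,1-c\}$ would require at least three distinct slopes not including $1/2$, i.e. $\{0,1/3,2/3,1\}$ or similar, forcing total length $>6$ by the length-pairing, contradiction. Hence $1/2\in\Sl_X$; since $\length_X(1/2)$ is even and positive, and the remaining length is $6-\length_X(1/2)\in\{0,2,4\}$, I rule out $\length_X(1/2)=6$ (that is supersingular, hence $\rk=0\ne 2$) and $\length_X(1/2)=2$ gives $\Sl_X=\{0,1/2,1\}$ with $\length(0)=\length(1)=2$ — wait, I need $\length(0)=\length(1)$ and they sum with $\length(1/2)=2$ to $6$, so $\length(0)=\length(1)=2$; but the integrality property requires $\length(0)$ and $\length(1)$ with slope $0$ contributing $0$ and slope $1$ contributing $1\cdot\length(1)=2\in\Z$, fine. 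Actually for almost ordinary the definition requires $\length(1/2)=2$ with $\Sl_X=\{0,1/2,1\}$ — the lengths of $0$ and $1$ are then each $2$ by symmetry, but the standard almost-ordinary definition in Definition \ref{NewtonType} only specifies $\length(1/2)=2$, so this is consistent. The case $\length_X(1/2)=4$ gives $\Sl_X=\{0,1/2,1\}$ with $\length(0)=\length(1)=1$ — but then, again, is there a slope $c\ne 1/2$ with $\Sl_X=\{c,1-c\}$? No, because $1/2$ is also present. So Theorem \ref{sloperank} does not directly apply. I would handle this sub-case by a direct rank computation in the spirit of Example \ref{mixed}: here $R_{X,\ssL}$ is empty (all $\alpha\ne q/\alpha$), yet $1/2$ is a slope of multiplicity $4$ — the eigenvalues $\alpha$ with $\ord_\PP(\alpha)=1/2$ satisfy $\ord_\PP(\alpha^2/q)=0$ but need not have $\alpha^2/q$ a root of unity; there are $4$ such, and the induced relations among the four $\beta_i=\alpha_i^2/q$ of slope-zero type, together with the two of slopes $\pm c^{\prime}$, must be analyzed. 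I expect the main obstacle to be precisely this bookkeeping: showing that the only non-neat sextic-CM threefold has $\length_X(1/2)=2$ rather than $4$ — i.e., excluding the "$K3$-type with $1/2$" Newton polygon. I would attack it by noting that if $\length_X(1/2)=4$ then among the six eigenvalues four have the same $\PP$-adic valuation $1/2$; the corresponding $\beta_i$ lie in the subgroup of $\Gamma^{\prime}$ of $\PP$-adic valuation $0$, and combined with Corollary \ref{rankquad} applied to the imaginary quadratic subfield this forces extra multiplicative relations, pushing $\rk(X)$ down to $\le 1$, contradicting $\rk(X)=2$. Once $\length_X(1/2)=2$ is established, $\Sl_X=\{0,1/2,1\}$ with $\length(1/2)=2$ is exactly the almost-ordinary condition of Definition \ref{NewtonType}, and $\dim(X)=3$, completing the proof.
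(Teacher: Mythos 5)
Your reduction to the crucial case is essentially the paper's: you correctly get down to a simple threefold with $\P_X=\P_{X,\min}$ irreducible (sextic CM-field), $\rk(X)=\dim(X)-1=2$, and then use Theorem \ref{sloperank} plus the denominator bounds of Remark \ref{fraction} to force $1/2\in\Sl_X$ and $\Sl_X=\{0,1/2,1\}$ with $\length(1/2)\in\{2,4\}$. But at the decisive step --- excluding $\length(1/2)=4$ --- you have a genuine gap. Your proposed argument (``the four slope-$1/2$ eigenvalues give $\beta_i$'s of valuation $0$, and Corollary \ref{rankquad} applied to the imaginary quadratic subfield forces extra relations pushing $\rk(X)\le 1$'') is not a proof and, more importantly, points in the wrong direction: within the corollary no imaginary quadratic subfield of $E$ is available (its existence is part of the conclusion of Theorem \ref{main}, supplied separately by case (a) of the classification), and the true fact here is the opposite of what you hope to extract. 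If $\length(1/2)=4$ then $\length(0)=\length(1)=1$, i.e.\ $X$ is of K3 type (Definition \ref{NewtonType}), and the paper kills this case by citing that a \emph{simple} abelian variety of K3 type has $\rk(X)=\dim(X)$ (Remark \ref{bigrank}, from \cite{ZarhinK3}); so the $\beta_i$'s are multiplicatively \emph{independent}, $\rk(X)=3\ne 2$, contradiction. Without this input (or an equivalent substitute) your case $\length(1/2)=4$ remains open, and no amount of valuation bookkeeping of the kind you sketch will close it, since there is no relation to be found there.

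Two smaller points. For $\dim(X)\le 2$ the paper simply cites Theorem 3.5 of \cite{ZarhinEssen}; your direct attempt misquotes Section \ref{dim2}: Corollary \ref{nonexist} does not say that no simple abelian surface over a sufficiently large field has rank $1$, and you do not need such a statement --- if $\rk(X)=1$ for simple $X$ then $X$ is neat by Example \ref{rank1} (the same observation you yourself use in the threefold case), and $\rk(X)=2=d$ gives neatness by Example \ref{nosuper}. Also, your claim that $e=3$ forces $X$ to be supersingular is false (a simple threefold with imaginary quadratic $E$ and $\Sl_X=\{1/3,2/3\}$, $e=3$, exists); the conclusion survives only because in that case $\#(R_X)=2$ gives $\rk(X)\le 1$ and Example \ref{rank1} or \ref{supersingular} again yields neatness.
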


\begin{proof}
It follows from Remark \ref{multiplicities} that $X$ is absolutely simple.
The equality $\dim(X)=3$ follows from Theorem 3.5 in \cite{ZarhinEssen}. Since $X$ is not neat,
$1< \rk(X)< \dim(X)=3$. This implies that
$\rk(X)=2$ and therefore $\deg(\P_{X,\min}) > 2 \cdot 2=4$.
Since $\deg(\P_{X,\min})$ divides $\deg(\P_X)=6$, we conclude that
 $\deg(\P_{X,\min})=\deg(\P_X)$, i.e., $\P_X(t)=P_{X,\min}(t)$ is irreducible over $\Q$. Since $\dim(X)=3$ is {\sl odd}, it follows from Theorem \ref{sloperank} that the Newton polygon of $X$ has, at least, 3 distinct slopes. In addition,
 Remark \ref{fraction}  implies that  all the slopes different from $1/2$ can be presented as fractions, whose denominator is strictly less than $\dim(X)$. In other words, $\Sl_X = \{0, 1/2,1\}$. In particular, $\length(1/2)=2$ or $4$. If $\length(1/2)=4$ then $\length(0)=\length(1)=1$ and  $X$ is  of K3 type, which is not the case,  since the rank of a simple abelian variety of K3 type equals its dimension \cite{ZarhinK3}. Therefore $\length(1/2)=2$ and $\length(0)=\length(1)=2$, i.e., $X$ is almost ordinary.
\end{proof}

\section{Abelian Surfaces}
\label{dim2}

The following statement may be viewed as a rewording of Example (2)(A) in \cite[Sect. 8.4, p. 64]{ShimuraCM}.
(See \cite[Th. 5]{Shimura}, \cite{OortEndo} and also \cite{OZ} where the endomorphism algebras of abelian varieties and complex tori are discussed in detail.)

\begin{thm}
\label{surfaceIM}
Let $\L$ be a quartic CM-field that contains an imaginary quadratic field $B$. Let $S$ be a complex abelian surface provided with an embedding $\L \hookrightarrow \End^0(S)$. Then $S$ is isogenous to a square of an elliptic curve with complex multiplication. In particular, $S$ is not simple.
\end{thm}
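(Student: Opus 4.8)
The plan is to realise $S$ as a complex abelian surface of CM type, and then to show that this CM type is forced to be imprimitive, so that Shimura's theory of complex multiplication splits $S$ up to isogeny. To begin, the embedding $\L\hookrightarrow\End^0(S)$ together with $[\L:\Q]=4=2\dim(S)$ makes $S$ an abelian variety with complex multiplication by the CM-field $\L$. Hence, by the main theorem of complex multiplication (Shimura--Taniyama; see \cite{ShimuraCM}), $S$ is isogenous to $\C^2/\Phi(\a)$ for some fractional $\Oc_{\L}$-ideal $\a$ and some CM-type $\Phi=\{\phi_1,\phi_2\}$ of $\L$; here $\phi_1,\phi_2\colon\L\hookrightarrow\C$ are two embeddings that are not complex conjugates of one another and which, together with their complex conjugates, exhaust the four embeddings of $\L$. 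The whole problem then reduces to proving that $\Phi$ is \emph{not} primitive, i.e. that it is induced from a CM-type of a proper CM-subfield of $\L$; since $[\L:\Q]=4$, such a subfield is necessarily imaginary quadratic, and then the induced-CM-type case of Shimura's theorem gives $S\sim E^2$ with $E$ an elliptic curve having complex multiplication.

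The next step is to pin down the structure of $\L$. As a quartic CM-field, $\L$ is a totally imaginary quadratic extension of its maximal totally real subfield $\L^{+}$, which is real quadratic; by hypothesis $\L$ also contains an imaginary quadratic subfield $B$, necessarily distinct from $\L^{+}$. Thus $B$ and $\L^{+}$ are two distinct quadratic subfields of the quartic field $\L$ with $B\cap\L^{+}=\Q$, so $\L=B\cdot\L^{+}$ is the compositum of two quadratic --- hence Galois --- extensions of $\Q$; therefore $\L/\Q$ is Galois with $\Gal(\L/\Q)\cong(\Z/2\Z)^2$. Write $\Gal(\L/\Q)=\{1,\sigma,\tau,\sigma\tau\}$, with $\langle\sigma\rangle=\Gal(\L/B)$ and $\langle\tau\rangle=\Gal(\L/\L^{+})$; then $\tau$ is complex conjugation, and $B':=\L^{\langle\sigma\tau\rangle}$ is a third quadratic subfield, again imaginary because it is not contained in the maximal totally real subfield $\L^{+}$.

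Now I would run the CM-type bookkeeping. Fixing one embedding $\L\hookrightarrow\C$ identifies the four embeddings of $\L$ with the elements of $\Gal(\L/\Q)$, and a CM-type becomes a $2$-element subset $\Phi$ with $\Phi\cap\tau\Phi=\emptyset$. The $\tau$-pairs are $\{1,\tau\}$ and $\{\sigma,\sigma\tau\}$, so $\Phi$ is one of $\{1,\sigma\}$, $\{\tau,\sigma\tau\}$, $\{1,\sigma\tau\}$, $\{\tau,\sigma\}$: the first two are the cosets of $\langle\sigma\rangle=\Gal(\L/B)$ and the last two are the cosets of $\langle\sigma\tau\rangle=\Gal(\L/B')$. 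Thus, in every case, $\Phi$ consists exactly of the embeddings of $\L$ whose restriction to $B$ (respectively $B'$) equals one fixed embedding of that imaginary quadratic field, which is to say $\Phi$ is induced from a CM-type of $B$ or of $B'$, and so $\Phi$ is imprimitive. By the induced-CM-type part of Shimura's theorem, $S$ is then isogenous to $E^{2}$, where $E$ is an abelian variety with the corresponding CM-type on an imaginary quadratic field, i.e. an elliptic curve with complex multiplication; in particular, $S$ is not simple.

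I do not anticipate a serious obstacle here: the only genuinely new ingredient is the elementary observation that a quartic CM-field containing an imaginary quadratic subfield is automatically biquadratic, so that every one of its CM-types is induced; everything else is a routine application of Shimura's complex-multiplication theory and of the standard dictionary between imprimitive CM-types and isogeny splittings. The one point demanding a little care is that the elliptic curve $E$ may have complex multiplication by $B$ or by the ``other'' imaginary quadratic subfield $B'$, depending on which coset $\Phi$ turns out to be, but this does not affect the conclusion. Should one prefer to avoid invoking the CM dictionary, the alternative would be to construct the splitting $S\sim E^{2}$ directly from the complex uniformisation, realising $E$ via an eigen-subspace for the action of $B$ (or $B'$) on $H_1(S,\Q)$.
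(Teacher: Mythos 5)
Your proof is correct and follows essentially the same route as the paper: you observe that $\L$ is biquadratic with a second imaginary quadratic subfield, check that each of the four CM-types is a coset of one of the two corresponding subgroups (hence imprimitive, induced from an imaginary quadratic field), and then invoke Shimura's theory (the paper cites Lang's Theorem 3.5, phrased via the nontrivial automorphism group $\Aut(\Phi)$ of each CM-type) to split $S$ up to isogeny as the square of a CM elliptic curve.
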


\begin{proof}
We may view $\L$ as a subfield of $\C$. Then $B=\Q(\sqrt{-d})$  where $d$ is a positive integer. The field $\L$ contains the real quadratic subfield $\Q(\sqrt{r})$ where $r$ is a square-free positive integer. Clearly,
$$\L=\Q\oplus \Q\sqrt{-d}\oplus \Q\sqrt{r}\oplus\Q\sqrt{-rd}$$
is a Galois extension of $\Q$.
 This implies that $\L$ contains a second imaginary quadratic subfield $H:=\Q(\sqrt{-rd})$. The natural map
$B\otimes_{\Q} H \to \L, \ b\otimes h \mapsto bh$
 is a field isomorphism. In addition,
the natural injective homomorphism
$$\Gal(B/\Q) \times \Gal(H/\Q) \hookrightarrow \Gal(\L/\Q)$$
is surjective and therefore is a group isomorphism.  Since $[\L:\Q]=2\cdot 2$, it admits $2^2=4$ CM-types $\Phi$ \cite[Sect. 22]{Mumford}, \cite{Lang}. Here is the list of all them. We have two CM-types $\Gal(B/\Q)\otimes \tau_2$ indexed by $\tau_2 \in \Gal(H/\Q)$ and two CM-types $\tau_1 \otimes \Gal(H/\Q)$ indexed by $\tau_1 \in \Gal(B/\Q)$.  They all have {\sl nontrivial} automorphism groups
$$\Aut(\Phi):=\{\sigma \in \Gal(\L/\Q)\mid \sigma\Phi=\Phi\}.$$
Namely, $\Aut(\Phi)=\Gal(B/\Q)$ for the former two CM-types and $\Aut(\Phi)=\Gal(H/\Q)$
for the latter two. Now the result follows from Theorem 3.5 of \cite[p. 13]{Lang}
(applied to $F=\L$.)
\end{proof}

\begin{cor}
\label{nonexist}
There does not exist an abelian surface $Y$ over a finite field $k$ that enjoys the following properties.

\begin{itemize}
\item[(i)]
All endomorphisms of $Y$ are defined over $k$.
\item[(ii)]
$\End^0(Y)$ is a quartic CM-field that contains an imaginary quadratic subfield.
\end{itemize}
\end{cor}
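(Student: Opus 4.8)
The plan is to assume such a $Y$ exists and to reach a contradiction by lifting $Y$ to characteristic zero, where Theorem \ref{surfaceIM} applies. First I would unwind the hypotheses. Since $E:=\End^0(Y)$ is a quartic CM-field, it is a field of $\Q$-dimension $4=2\dim(Y)$, hence a division algebra, so $Y$ is simple over $k$ (and $E=\Q[\Fr_Y]$ by Tate's theorem). Since all endomorphisms of $Y$ are defined over $k$ we have $\End(Y)=\End(Y\times_k\bar{k})$, so for every finite extension $k'/k$ the inclusions $\End(Y)\subseteq\End(Y\times_k k')\subseteq\End(Y\times_k\bar{k})=\End(Y)$ are equalities; thus $\End^0(Y\times_k k')\cong E$ for every such $k'$, and in particular $Y\times_k k'$ is simple for every finite extension $k'/k$.

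Next I would carry out the lifting step. By Honda--Tate theory (see \cite{Tate2}), a simple abelian variety over a finite field whose endomorphism algebra is a CM-field of degree twice its dimension becomes, after a finite extension of the base field, isogenous to the reduction of an abelian variety with complex multiplication by that same CM-field, defined over a number field. Applying this to $Y$, and using that abelian varieties with complex multiplication have potential good reduction everywhere, I obtain: a number field $F\subset\C$, an abelian surface $\mathcal Y$ over $F$ with an embedding $E\hookrightarrow\End^0(\mathcal Y)$, a place $v$ of $F$ lying above $p$ at which $\mathcal Y$ has good reduction, and a finite field $k'\supseteq k$ containing the residue field at $v$, such that the reduction $\mathcal Y_v$ of $\mathcal Y$ at $v$ is isogenous over $k'$ to $Y\times_k k'$. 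Viewing $F$ inside $\C$, the complex abelian surface $S:=\mathcal Y\otimes_F\C$ inherits the embedding $E\hookrightarrow\End^0(S)$, so Theorem \ref{surfaceIM} (applied with $\L=E$, which contains an imaginary quadratic subfield by hypothesis (ii)) tells us that $S$ is isogenous to the square of an elliptic curve. After enlarging $F$ once more I may assume that $\mathcal Y$ is $F$-isogenous to $\mathcal E^{2}$ for an elliptic curve $\mathcal E/F$ that also has good reduction at $v$.

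Finally I would reduce modulo $v$: an isogeny $\mathcal E^{2}\to\mathcal Y$ reduces to an isogeny over the residue field, so $\mathcal Y_v$ is isogenous to $\mathcal E_v^{2}$ and therefore $\End^0(\mathcal Y_v)\cong M_2(\End^0(\mathcal E_v))$ is a non-commutative $\Q$-algebra. On the other hand $\mathcal Y_v$ is isogenous over $k'$ to $Y\times_k k'$, so $\End^0(\mathcal Y_v)\cong\End^0(Y\times_k k')\cong E$, which is a field. This contradiction shows that no such $Y$ exists.

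The point I expect to require the most care is the precise invocation of Honda--Tate theory (complex-multiplication lifting up to isogeny) together with the bookkeeping of the successive finite extensions of $F$ and of the base field, so that the two isogenies $\mathcal Y_v\sim Y\times_k k'$ and $\mathcal Y_v\sim\mathcal E_v^{2}$ ultimately live over one common finite field $k'\supseteq k$; the remaining ingredients — potential good reduction of CM abelian varieties, reduction of isogenies, and the triviality that a quartic field is not a $2\times 2$ matrix algebra — are entirely standard. Alternatively, one could avoid the lift altogether: $\Fr_Y\in E$ determines a $p$-adic CM-type of the biquadratic CM-field $E$, which is necessarily imprimitive, and this already forces $Y\times_k\bar{k}$ to be isogenous to the square of an elliptic curve, yielding the same contradiction.
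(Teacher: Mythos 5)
Your proposal is correct and follows essentially the same route as the paper: use hypothesis (i) to keep $\End^0(Y)$ a quartic CM-field after finite base extension, lift $Y$ (up to isogeny) to characteristic zero via Honda--Tate \cite[Sect.~3, Th.~2]{Tate2}, and contradict Theorem~\ref{surfaceIM}. The only difference is presentational: the paper asserts tersely that the lift $A$ is absolutely simple because its reduction is, while you make the same point explicitly by reducing the isogeny $\mathcal{E}^2\to\mathcal{Y}$ modulo $v$ and comparing $M_2(\End^0(\mathcal{E}_v))$ with the field $E$.
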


\begin{proof}
Assume that such $Y$ does exist. Then it is absolutely simple.
Replacing if necessary, $k$ by its finite overfield and $Y$ by a $k$-isogenous abelian variety, we may and will assume that $Y$ can be {\sl lifted} to an abelian variety $A$ in characteristic zero such that there is an embedding $\End^0(Y)\hookrightarrow \End^0(A)$ \cite[Sect. 3, Th. 2]{Tate2}. It follows that $A$ is absolutely simple, which contradicts Theorem \ref{surfaceIM}. The obtained contradiction proves Corollary.
\end{proof}

\section{Proof of Theorem \ref{main}}
\label{mainproof}
Assume that $X$ is {\sl not} neat, $k$ is sufficiently large  and $1 \le \dim(X) \le 3$. According to \cite[Th. 3.5 on p. 283]{ZarhinEssen}, $\dim(X)=3$ and one of the following two conditions holds.

\begin{itemize}
\item[(a)] $X$ is simple, $E=\End^0(X)$ is a number field that contains an imaginary quadratic subfield $B$ such that $\Norm_{E/B}(q^{-1}\Fr_X^2)$ is  a root of unity.
\item[(b)]
$X$ is isogenous over $k$ to a product $Y \times Z$ of a {\sl simple} abelian surface $Y$ and an elliptic curve $Z$; $\End^0(Y)$ is a quartic CM-field containing an imaginary quadratic subfield.
\end{itemize}

It follows from Corollary \ref{nonexist} that such an $Y$ does not exist.  Indeed, $\Gamma(X,k)=\Gamma(Y,k)\Gamma(Z,k)$; in particular, $\Gamma(Y,k)$ does not contain nontrivial roots of unity. Therefore all endomorphisms of $Y$ are defined over $k$. Hence $Y$ is absolutely simple. This contradicts to  Corollary \ref{nonexist} and implies that the case (b) does not occur.

In the case (a), Corollary \ref{rightNP}
implies that $X$ is  almost ordinary.  Let us fix a field embedding $B \subset \C$ and let
$$\sigma_1, \sigma_2,\sigma_3: E \hookrightarrow \C$$ be the list of field embedding $E \to \C$ that coincide with the identity map on $B$. Clearly, $\{\sigma_1,\sigma_2,\sigma_3\}$ is a CM-type of $E$.
Let us put
$$\alpha_1=\sigma_1(\Fr_X)\in \C, \ \alpha_2=\sigma_2(\Fr_X)\in \C, \alpha_3=\sigma_3(\alpha_3)\in \C.$$
It follows from Example \ref{simpleCM} that
$$R_X=\{\alpha_1,\alpha_2,\alpha_3; \ q/\alpha_1,q/\alpha_2,q/\alpha_3\}.$$
This implies that
$$L=\Q(R_X)=\Q(\alpha_1,\alpha_2,\alpha_3)=B(\alpha_1,\alpha_2,\alpha_3)$$
and the root of unity
$$\Norm_{E/B}\left(q^{-1}\Fr_X^2\right)=\prod_{i=1}^3 \sigma_i \left(q^{-1} \Fr_X^2\right)=q^{-3}\prod_{i=1}^3  \alpha_i^2 \in \Gamma(X,k).$$
 Since $\Gamma(X,k)$ does {\sl not} contain nontrivial roots of unity,
 $$\Norm_{E/B}(q^{-1}\Fr_X^2)=1.$$
 (By the way, this gives us the relation
 $$q^3=\left(\prod_{i=1}^3 \alpha_i\right)^2.)$$
 This ends the proof.

\section{Examples}
\label{exam}
Throughout this section, $p$ is a prime, $B$ an imaginary quadratic field such that $p$ does {\sl not} split in $B$, i.e., the tensor product
$$B_p:=B \otimes_{\Q}\Q_p$$
is a field that is a quadratic extension of $\Q_p$. We fix an embedding of $B$ into $\C$ and view $B$ as a certain subfield of $\C$. For the sake of simplicity,  let us assume that $p$ is {\sl unramified} in $B$.

 Let $K$ be a totally real  cubic field such that the tensor product
$$K_p:=K \otimes_{\Q}\Q_p$$
is isomorphic to a direct sum $\Q_p\oplus B_p $.  Since $\Q_p$ and $B_p$ are non-isomorphic field extensions of  $\Q_p$, the cubic field extension  $K/\Q$ is not Galois. Let $\Oc_K$ be the ring of integers in the number field $K$. The description of $K_p$ means that the ideal $p\Oc_K$ splits into a product $\B_1 \B_2$ of two distinct  maximal ideals
$\B_1$ and $\B_2$ such that the completion $K_{\B_1}$ with respect to $\B_1$-topology is $\Q_p$ while the
completion $K_{\B_2}$ with respect to $\B_2$-topology is isomorphic to $B_p$.

Let $\tilde{K}$ be the normal closure of $K$. Clearly, $\tilde{K}/K$ is a quadratic extension and the Galois group of
$\tilde{K}/\Q$ is the full symmetric group $\ST_3$. Let $K_2$ be the subfield of $\A_3$-invariants in $\tilde{K}$ where $\A_3$ is the corresponding alternating (sub)group.  Then $K_2/\Q$ is a quadratic field extension that coincides with the maximal abelian subextension of $\tilde{K}/\Q$.

Clearly, $K$ and $B$ are linearly disjoint over $\Q$. Let us consider its tensor product (compositum)
$$E:=B\otimes_{\Q}K;$$
it is a sextic CM-field, which is an imaginary quadratic extension of totally real $K=1\otimes K$ with the {\sl complex conjugation}
$$x\otimes y \mapsto \bar{x}\otimes y \ \forall x \in B\subset \C, \ y \in K.$$
Similarly, $\tilde{K}$ and $B$ are linearly disjoint over $\Q$ and its  tensor product (compositum)
$L:=B\otimes_{\Q}\tilde{K}$  is a degree 12  Galois closure of $E$. (Actually, $L$ is a quadratic extension of $E$ It follows from \cite[Lemma 18.2.iii]{ShimuraCM} that $L$ is a CM-field.)  In addition, the natural group homomorphism
$$ \Gal(B/\Q) \times \Gal(\tilde{K}/\Q) \to \Gal(L/\Q)$$
is an isomorphism. In particular, the quartic field extension $(B\otimes_{\Q}K_2)/\Q$ is the maximal abelian subextension of $L/\Q$. In addition,
$$\Gal((B\otimes_{\Q}K_2)/\Q)=\Gal(B/\Q) \times \Gal(K_2/\Q)$$
is a product of two cyclic groups of order 2. Since every root of unity in $L$ must lie in the (quartic) maximal abelian $B\otimes_{\Q}K_2$, the number $m_L$ of roots of unity in $L$ is $2,4,6$ or $8$. (It cannot be $10$, since the order 4 Galois group of the fifth cyclotomic field over $\Q$ is cyclic.)

Notice that every proper {\sl maximal subfield} $F$ of $E$ is either $B=B\otimes 1$ or $K=1\otimes K$. Indeed, $F$ is either quadratic or cubic extension of $\Q$. If $F$ is quadratic and does not coincide with $B$ then $F$ and $B$ are linearly disjoint over $\Q$, their tensor product $B\otimes F$ is a quartic field extension and the natural map
$$B\otimes F \to E,  \ x,y \mapsto xy$$
is a field embedding. This implies that sextic $E$ contains a quartic subfield, which is not the case, since $4$ does {\sl not} divide $6$. Now assume that $F$ is cubic. Since $F$ is a subfield of the CM-field $E$,  it follows from  \cite[Lemma 18.2.iv]{ShimuraCM} that $F$ is either totally real or a CM-field. Since $3=[F:\Q]$ is odd, $F$ is not a CM-field. It follows that $F$ is totally real and therefore lies in $K$. This implies that $F=K$.

Let us fix an embedding of $L$ into $\C$ that acts as the identity map on $B$. Notice that there is a canonical isomorphism of semisimple $\Q$-algebras
$$B \otimes_{\Q}B \cong B\oplus B, \ u\otimes v \mapsto (uv, \bar{u}v).  \eqno(7);$$
The complex conjugation on the first factor $B$ (on the left hand side) permutes the summands $B$'s (on the right hand side).
Tensoring this isomorphism by $\Q_p$ (over $\Q$), we obtain the canonical isomorphism of semisimple $\Q_p$-algebras
$$B_p \otimes_{\Q_p}B_p \cong B_p\oplus B_p  \eqno(7\mathrm{bis})$$
such that the nontrivial automorphism of $B_p/\Q_p$ (that acts on the first factor $B_p$ on the left hand side) permutes the summands (on the right hand side).
We have
$$E_p:=E \otimes_{\Q}\Q_p=[B\otimes_{\Q}K]\otimes_{\Q}\Q_p=[B \otimes_{\Q}\Q_p]\otimes_{\Q_p}[K \otimes_{\Q}\Q_p]=
B_p\otimes_{\Q_p}K_p=$$
$$B_p\otimes_{\Q_p}[\Q_p\oplus B_p]=B_p\oplus [B_p\otimes_{\Q_p}B_p].$$
By (7bis) the second summand in the right hand side is a direct sum of two copies of  $B_p$. In addition
 the complex conjugation on $E=B\otimes_{\Q}K$  permutes these two copies. On the other hand, the conjugation leaves invariant the first summand $B_p$, acting on it as the only nontrivial automorphism of $B_p/\Q_p$ (induced by the complex conjugation on $B$). This implies that if $\Oc_E$ is the ring of integers in $E$ then the ideal $p\Oc_E$ splits into a product of three {\sl distinct} maximal ideals $\PP_0\PP_1\PP_2$ and these ideals enjoy the following properties.
\begin{itemize}
\item
$\PP_2$ coincides the complex-conjugate $\overline{\PP_1}$  of $\PP_1$.
\item
 $\PP_0$ coincides with its own complex-conjugate  $\overline{\PP_0}$.
\item
Each completion $E_{\PP_i}$ of $E$ in $\PP_i$-adic topology is isomorphic to $B_p$. In particular,
$$[E_{\PP_i}:\Q_p]=[B_p:\Q_p]=2 \  \forall i=0,1,2.$$
\end{itemize}

Let us consider the (nonzero) ideal $\PP=\PP_0\PP_1^2$ in $\Oc_E$. Clearly, its complex-conjugate $\bar{\PP}$ coincides
with $\PP_0\PP_2^2$. We have
$$\PP \bar{\PP}=(\PP_0\PP_1^2)(\PP_0\PP_2^2)=(\PP_0\PP_1\PP_2)^2=p^2\Oc_E.$$
There exists a positive integer $h$ such that $\PP^h$ is a principal ideal, i.e., there exists a nonzero $\beta\in\Oc_E$ such that
$$\PP^h=\beta \cdot \Oc_E.$$
It follows that its complex-conjugate
$\overline{\PP^h}=\bar{\beta} \cdot \Oc_E$
and
$$p^{2h}\cdot \Oc_E=(\PP \bar{\PP})^{2h}=\PP^h \overline{\PP^h}=\beta\bar{\beta} \cdot \Oc_E.$$
This implies that the ratio
$$u=\frac{p^{2h}}{\beta\bar{\beta}}$$
is a unit in $\Oc_E$. Clearly, $\bar{u}=u$, i.e., $u \in \Oc_K^{*}$. Let us put
$$\pi:=u\beta^2\in \Oc_E.$$
We have
$$\pi\bar{\pi}=p^{4h}, \ \pi\cdot \Oc_E=(\PP^h)^2=\PP^{2h}=\PP_0^{2h}\PP_1^{4h}, \
\bar{\pi}\cdot \Oc_E=\PP_0^{2h}\PP_2^{4h}.$$
 I claim that $\Q[\pi]$ coincides with $E$.
Indeed, if $\Q[\pi]$ does not coincides with $E$ then either $\pi \in B$ or $\pi \in K$.
Suppose that $\pi \in B$. Then $p^{2h}/\pi$ is a $p$-unit in $B$ with archimedean absolute value $1$.
It follows from Lemma \ref{elementary} that  $p^{2h}/\pi$ is  a root of unity. This implies that
$$\PP_0^{2h}\PP_1^{4h}=\pi \Oc_E=p^{2h}\Oc_E=(\PP_0\PP_1\PP_2)^{2h}$$
and therefore $\PP_1^{2h}=\PP_2^{2h}$,
which is not the case. This implies that $\pi$ does {\sl not} belong to $B$.
Suppose that  $\pi \in K$. Then
$$\PP_0^{2h}\PP_1^{4h}=\pi \Oc_E=\bar{\pi}\cdot \Oc_E=\PP_0^{2h}\PP_2^{4h}$$
and  therefore $\PP_1^{4h}=\PP_2^{4h}$,
which is not the case. This implies that $\pi$ does {\sl not} belong to $K$.

The same arguments work if we replace $h$ by $nh$ and $\pi$ by $\pi^n$ (for any positive integer $n$). This implies
 that $\Q[\pi^n]=E$ for all $n$.

Let us put
$$q:=p^{4h}.$$
Then $\pi$ is a Weil $q$-number in a sense of Honda--Tate \cite{Tate2}. By Honda-Tate theory \cite{Tate2} there exists a simple abelian variety $X$ over $\F_q$ such that $R_X$ coincides with the Galois orbit of $\pi$. We also have a canonical field isomorphism $\Q[\Fr_X]\cong \Q[\pi]=E$ that sends $\Fr_X$ to $\pi$.  Recall that $E$ is a CM-field; in particular, it has no {\sl real} places. By a theorem of Tate, $\End^0(X)$ is a finite-dimensional central division algebra over $E$, whose local invariants are zero outside divisors of $p$ while for each divisor $\PP_i$ of $p$ the local invariant of $\End^0(X)$  over $E_{\PP_i}$ is
$$c_{\PP_i}:=[E_{\PP_i}:\Q_p]\cdot  \frac{\ord_{\PP_i}(\pi)}{\ord_{\PP_i}(q)}\bmod \Z$$
where
$$\ord_{\PP_i}: E^{*} \to \Z$$
is the discrete valuation map attached to $\PP_i$ (\cite[Sect. 1, Th. 1]{Tate2}; see also \cite[Th. 5.4]{OortG}).  We have
$$c_{\PP_1}=2\cdot \frac{4h}{4h}\bmod \Z=2 \bmod \Z= 0, \ c_{\PP_2}=2\cdot \frac{0}{4h}\bmod \Z=0,$$ $$c_{\PP_0}=2\cdot \frac{2h}{4h}\bmod \Z=1 \bmod \Z=0.$$
We obtain that all the local invariants of $\End^0(X)$  are zero,  i.e.,  $\End^0(X)$ coincides with its center $E$.  In addition, $\dim(X)=[E:\Q]/2=3$. Since $E=\Q[\pi)=\Q[\pi^n]$ for all positive integers $n$, all endomorphisms of $X$ are defined over $\F_q$ \cite[Prop. 5.11]{OortG}. In particular, $X$ is absolutely simple.
Let us consider the $3$-element set $\Phi$ of all field embeddings
$$\sigma_i: E \hookrightarrow \C, \ i=1,2,3$$ that coincide with the identity map on $B$. As above, $R_X$ consists of six distinct elements
$$\{\alpha_1, \alpha_2, \alpha_3; q/\alpha_1, q/\alpha_2, q/\alpha_3\}$$
where $\alpha_i=\sigma_i(\pi)$ for all $i$.

 Now let $k/\F_q$ be a  degree $m_L$ field extension. Let $X_k =X\times_{\F_q}k$ be the abelian threefold obtained by the base change. Clearly, $k$ is sufficiently large with respect to $X$. It is also clear that there is an isomorphism $\End^0(X_k)\cong E$ that sends $\Fr_{X_k}$  to $\pi^{m_L}$; in addition, $R_{X_k}$ consists of six distinct elements
$$\{\gamma_1=\alpha_1^{m_L}, \gamma_2=\alpha_2^{m_L}, \gamma_3=\alpha_3^{m_L}; \ q^{m_L}/\gamma_1, q^{m_L}/\gamma_2, q^{m_L}/\gamma_3\}.$$

\begin{thm}
The abelian variety $X_k$ over $k$ enjoys the properties i)--iii) of Theorem \ref{main}.
\end{thm}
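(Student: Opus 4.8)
The plan is to verify the three conditions (i)--(iii) of Theorem \ref{main} in turn; the construction of $X$ and of the Weil number $\pi$ carried out above has already produced all the ingredients, so the argument mainly consists in assembling them. First, condition (i): we have arranged that $X$ is absolutely simple over $\F_q$, that $\End^0(X)$ coincides with its center, the sextic CM-field $E$, and that all endomorphisms of $X$ are defined over $\F_q$. Hence the same holds after base change to the finite overfield $k\supseteq\F_q$: $X_k$ is absolutely simple, all of its endomorphisms are defined over $k$ (equivalently, this is automatic since $k$ is sufficiently large, by Remark \ref{multiplicities}), and $\End^0(X_k)\cong E$ is again a sextic CM-field; under this isomorphism $\Fr_{X_k}=\Fr_X^{m_L}$ corresponds to $\pi^{m_L}$, and since $\Q[\pi^{n}]=E$ for every integer $n\ge 1$ (as shown above) we get $\End^0(X_k)=\Q[\Fr_{X_k}]$, i.e. $\Fr_{X_k}$ generates $\End^0(X_k)$.

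Next, condition (iii). The Newton polygon of an abelian variety over a finite field is unchanged under base change to a finite overfield: the eigenvalues of $\Fr_X$ are replaced by their $m_L$-th powers and $q$ by $q^{m_L}$, so the normalized slopes and their lengths do not change. Hence it suffices to compute $\Sl_X$ and $\length_X$ for $X$ over $\F_q$, and here I would use the fact that, $\End^0(X)$ being commutative and equal to $E$, the Newton polygon of $X$ is read off from the factorization of $\pi$ in $\Oc_E$: to each of the three primes $\PP_0,\PP_1,\PP_2$ of $E$ above $p$ one attaches the slope $\ord_{\PP_i}(\pi)/\ord_{\PP_i}(q)$ with multiplicity $[E_{\PP_i}:\Q_p]=2$. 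Using $\pi\Oc_E=\PP_0^{2h}\PP_1^{4h}$, $q=p^{4h}$ and the fact (granted by the construction) that $p$ is unramified in $E$, one obtains
$$\frac{\ord_{\PP_1}(\pi)}{\ord_{\PP_1}(q)}=1,\qquad \frac{\ord_{\PP_2}(\pi)}{\ord_{\PP_2}(q)}=0,\qquad \frac{\ord_{\PP_0}(\pi)}{\ord_{\PP_0}(q)}=\frac12,$$
exactly the fractions already appearing in the computation of the local invariants $c_{\PP_i}$ above, each slope having length $2$; since $2+2+2=6=2\dim(X)$ this accounts for the whole of $R_X$, so $\Sl_X=\{0,1/2,1\}$ and $\length_X(1/2)=2$, i.e. $X$, and therefore $X_k$, is almost ordinary.

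Finally, condition (ii). Here I would simply appeal to Theorem \ref{inert}: the field $k$ is sufficiently large with respect to $X_k$ (as already noted above), $X_k$ is simple with $\dim(X_k)=3>1$, $\End^0(X_k)=\Q[\Fr_{X_k}]$ is the CM-field $E$ of even degree $6>2$, and $E$ contains the imaginary quadratic field $B$ in which $p$ does not split (the standing hypothesis of this section). Theorem \ref{inert} then gives $\Norm_{E/B}\bigl((q^{m_L})^{-1}\Fr_{X_k}^2\bigr)=1$, and since $q^{m_L}$ is the cardinality of $k$ and $[E:B]=3$ this is precisely condition (ii). (Equivalently, one argues directly via Corollary \ref{rankquad}: this norm lies in $B\cap\Gamma'(X_k,k)$, which is finite because $p$ does not split in $B$, so the norm is a root of unity and hence equals $1$ since $k$ is sufficiently large.)

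I do not anticipate a genuine difficulty; the one step that should be checked with some care is the identification of the Newton polygon of $X$ from the splitting of $p$ in $E$ used in (iii), but once one knows that $\End^0(X)$ equals its center $E$ this is the standard Honda--Tate dictionary, and $\pi$ was constructed precisely so that the three relevant slopes come out as $1$, $0$ and $\frac12$.
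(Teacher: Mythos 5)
Your proof is correct, but it takes a genuinely different route from the paper's. The paper's own proof is a two-liner: it applies Theorem \ref{inert} to $X_k$ to get $1=\Norm_{E/B}\bigl((q^{m_L})^{-1}\Fr_{X_k}^2\bigr)=\prod_{i=1}^{3}(q^{m_L})^{-1}\gamma_i^2$, observes that this is a nontrivial relation among the six distinct elements of $R_{X_k}'$, hence that $X_k$ is \emph{not} neat, and then simply invokes the implication ``not neat $\Rightarrow$ (i)--(iii)'' of Theorem \ref{main}; in particular, almost ordinariness is inherited from Corollary \ref{rightNP} inside the proof of the main theorem and is never computed for this specific $X$. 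You instead verify (i)--(iii) by hand without using Theorem \ref{main} at all: (i) comes from the construction (absolute simplicity, $\End^0(X)=E$, and $\Q[\pi^n]=E$ for all $n$, so $\Fr_{X_k}=\pi^{m_L}$ generates $E$), (ii) is the same appeal to Theorem \ref{inert} (or Corollary \ref{rankquad}) that the paper makes, and (iii) is an explicit Newton polygon computation from $\pi\Oc_E=\PP_0^{2h}\PP_1^{4h}$, $q=p^{4h}$, assigning to each $\PP_i$ the slope $\ord_{\PP_i}(\pi)/\ord_{\PP_i}(q)$ with multiplicity $[E_{\PP_i}:\Q_p]=2$, together with invariance of slopes under base field extension. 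That dictionary step, which you rightly flag as the one needing care, is legitimate here and can even be justified inside the paper's framework: since $\End^0(X)=E$ has degree $2\dim(X)$, we have $e=1$ and $\P_X=\P_{X,\min}$ is irreducible, its roots are the conjugates $\sigma(\pi)$, and the number of embeddings $\sigma:E\hookrightarrow L$ inducing the prime $\PP_i$ is exactly $[E_{\PP_i}:\Q_p]=2$, giving $\Sl_X=\{0,1/2,1\}$ with $\length(1/2)=2$. The trade-off: the paper's argument is shorter and piggybacks on the already-proven characterization, while yours is more self-contained, does not rely on the ``only if'' direction of Theorem \ref{main}, and yields the explicit slope data of $X$ as a byproduct.
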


\begin{proof}
By Theorem \ref{inert}
$$1=\Norm_{E/B}(\pi^n)=\prod_{i=1}^3 \left(q^{-n}\gamma_i^2\right).$$
It follows that the abelian threefold $X_k$ is {\sl not} neat.
Now the result follows from  Theorem \ref{main}.
\end{proof}

Now let us construct explicitly  $B$ and $K$ as above. For the sake of simplicity. let us assume that $p$ is odd. Fix a positive integer $d$ such that $-d \bmod p$ is {\sl not} a square in $\F_p$. Let us put $B=\Q(\sqrt{-d})$. Clearly, $B_p$ is an unramified quadratic extension of $\Q_p$. Such an extension of $\Q_p$ is unique, up to an isomorphism. In other words, the field extension $B_p/\Q_p$ does not depend on a choice of $d$ up to an isomorphism.

In order to construct $K$, choose a prime $\ell \ne p$ such that  $\ell  \bmod p$ is {\sl not} a square in $\F_p$.
Let us consider the cubic  polynomial
$$f(x)=x(x^2- \ell) + \frac{p\ell}{(p\ell+1)^4} \in \Z_{(\ell)}[x] \bigcap  \Z_{(p)}[x] \subset \Q[x].$$
By Eisenstein's criterion over $\Z_{\ell}$, $f(x)$ is irreducible over $\Q_{\ell}$ and therefore over $\Q$. Now we may define the cubic field
$$K:=\Q[x]/f(x)\Q[x].$$
The reduction of $f(x)$ modulo $p$ coincides with the product
$x(x^2-\ell)$ of the linear polynomial $x$ and the irreducible quadratic polynomial $x^2-\ell$. By Hensel's lemma,  $f(x)$ splits over $\Q_p$ into a product
$$f(x)=f_1(x) f_2(x)$$
of a linear polynomial polynomial $f_1(x)$ and a quadratic polynomial $f_2(x)$; in addition, $f_2(x)$ splits into a product of two linear factors over an unramified quadratic extension of $\Q_p$. It follows that
$$K_p:=K \otimes_{\Q}\Q_p=\Q_p[x]/f_1(x)\Q_p[x] \oplus \Q_p[x]/f_2(x)\Q_p[x]=\Q_p\oplus \Q_p[x]/f_2(x)\Q_p[x]$$
where $\Q_p[x]/f_2(x)\Q_p[x]$ is an unramified quadratic extension of $\Q_p$. This implies that
$\Q_p[x]/f_2(x)\Q_p[x] \cong B_p$ and therefore
$$K_p \cong \Q_p \oplus B_p.$$
It remains to check that $K$ is totally real, i.e., all complex roots of $f(x)$ are real. Let $w$ be a complex root of $f(x)$. Then $f(w)=0$, i.e.,
$$w(w-\sqrt{\ell}) (w+\sqrt{\ell})=- \frac{p\ell}{(p\ell+1)^4}$$
and therefore
$$\min\{\mid w\mid, \mid(w-\sqrt{\ell}\mid, \mid(w+\sqrt{\ell}\mid\}<\frac{1}{p\ell+1}\le\frac{1}{3\cdot 2+1}=\frac{1}{7}.$$
This implies that $w$ lies in one of three circles of radius $1/7$ with real  centers $0, \sqrt{\ell}$ and $-\sqrt{\ell}$ respectively.
Since  the distance between any two centers is  $\ge \sqrt{\ell}>2/7$, these circles  do not meet each other. In particular, every root $w$ lies exactly in one of these circles.  On the other hand, let $w_1,w_2, w_3$ be the set of all roots of $f(x)$. If $a$ is any of the centers $0,\sqrt{\ell}, -\sqrt{\ell}$ then
$a(a^2-\ell)=0$ and therefore
$$\frac{p\ell}{(p\ell+1)^4}=f(a)=\prod_{i=1}^3 (a-w_i).$$
This implies that
$$\min\{ \mid a-w_i\mid, 1 \le i \le 3\}<\frac{1}{p\ell+1}\le \frac{1}{7}.$$
It follows that each of these circles contains, at least, one root of $f(x)$. We conclude that each circle contains exactly one root of $f(x)$.
Since each of these circles is stable under the complex conjugation, all the roots   of $f(x)$ must be real.  This proves that $K$ is totally real and we are done.

\section{Abelian fourfolds}
\label{dim4}

The following observation was inspired by results of Rutger Noot \cite[Prop. 4.1 on p. 165 and p. 168]{NootCrelle} about the reduction type of abelian varieties of Mumford's type \cite[Sect. 4]{MumfordMA}.

\begin{thm}
Let $X$ be an  abelian fourfold over $k$. Suppose that $k$ is sufficiently large with respect to $X$, $\rk(X)=3$ and $X$ enjoys  one of the following two properties.

\begin{itemize}
\item
$X$ is absolutely simple.
\item
$X$ is isogenous over $k$ to a product $X^{(3)}\times X^{(1)}$ of an (absolutely) simple abelian threefold $X^{(3)}$ and an ordinary elliptic curve $X^{(1)}$. \end{itemize}

Then at least one of the following two conditions holds.

\begin{itemize}
\item[(i)]
 there exist an imaginary quadratic field $B$ and an embedding $B \hookrightarrow \End^0(X)$ that sends $1$ to $1$.
\item[(ii)]
$X$ is not simple and $X^{(3)}$ is an almost ordinary abelian threefold that is not neat and therefore satisfies the conditions of Theorem \ref{main}. In particular, $\End^0(X^{(3)})$ contains an imaginary quadratic subfield.
\end{itemize}
\end{thm}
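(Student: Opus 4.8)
The plan is to treat the two cases allowed by the hypothesis in turn. Suppose first that $X$ is isogenous over $k$ to $X^{(3)}\times X^{(1)}$. Since $X^{(1)}$ is an ordinary elliptic curve we have $\rk(X^{(1)})=1$, so the inequalities $\rk(X^{(3)})\le\rk(X)\le\rk(X^{(3)})+1$ recalled in Section \ref{neat} force $\rk(X^{(3)})\in\{2,3\}$. If $\rk(X^{(3)})=2<3=\dim(X^{(3)})$, then $X^{(3)}$ is not neat, so by Corollary \ref{rightNP} it is almost ordinary and by Theorem \ref{main} its endomorphism algebra is a sextic CM field containing an imaginary quadratic subfield; this is exactly alternative (ii). If $\rk(X^{(3)})=3=\dim(X^{(3)})$, I would first check that $\End^0(X^{(3)})$ is a sextic CM field: $X^{(3)}$ is not supersingular, so $\Q[\Fr_{X^{(3)}}]$ is a CM field of degree dividing $2\dim(X^{(3)})=6$, and since (Example \ref{nosuper}) $\rk(X^{(3)})\le\tfrac12[\,\Q[\Fr_{X^{(3)}}]:\Q\,]$, the degree is $6$ and $\End^0(X^{(3)})=\Q[\Fr_{X^{(3)}}]$. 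Then $\rk(X^{(3)}\times X^{(1)})=\rk(X^{(3)})=\rk(X^{(3)})+\rk(X^{(1)})-1$, so Corollary \ref{essen211} gives an imaginary quadratic subfield $B\subset\End^0(X^{(3)})$ isomorphic to $\End^0(X^{(1)})$; as $X^{(3)}$ and $X^{(1)}$ are non-isogenous, $\End^0(X)=\End^0(X^{(3)})\times\End^0(X^{(1)})$ and the diagonal copy of $B$ embeds into $\End^0(X)$ sending $1$ to $1$, which is alternative (i).

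Now suppose $X$ is absolutely simple. Since $\rk(X)=3>0$, $X$ is not supersingular, so $E:=\Q[\Fr_X]$ is a CM field of degree dividing $2\dim(X)=8$; the bound $\rk(X)\le\tfrac12[E:\Q]$ then forces $[E:\Q]=8$, hence the multiplicity $e=2\dim(X)/[E:\Q]$ is $1$, $\End^0(X)=E$ is an octic CM field generated by $\Fr_X$, and $\P_X=\P_{X,\min}$ is irreducible. Thus $R_X$ has $8$ elements, $R_X'$ has $8$ distinct elements (Remark \ref{clean}) falling into four inverse-pairs, and $\rk(\Gamma'(X,k))=3=\#(R_X')/2-1$; alternative (i) now says that the octic CM field $E$ contains an imaginary quadratic subfield. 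Because $\rk(X)=\dim(X)-1$, Theorem 3.6(b) of \cite{LenstraZarhin} (as used in the proof of Theorem \ref{sloperank}) provides $\alpha_1,\dots,\alpha_4\in R_X$ and signs $n_i\in\{\pm1\}$ with $\prod_{i=1}^4(q^{-1}\alpha_i^2)^{n_i}$ a root of unity, hence equal to $1$ since $k$ is sufficiently large. An argument in the spirit of Lemma \ref{super} and Remark \ref{clean} excludes any $q^{-1}\alpha_i^2$ being a root of unity, and a short normalisation — replacing some $\alpha_i$ by $q/\alpha_i$ and sorting the factors into the four inverse-pairs — rewrites the relation as $\prod_{\sigma\in\Phi}\sigma(q^{-1}\Fr_X^2)=1$ for a CM-type $\Phi$ of $E$, equivalently $\prod_{\sigma\in\Phi}\sigma(\Fr_X)=\pm q^{2}\in\Q$.

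To finish the simple case I would descend along $\Gal(L/\Q)$. Applying $g\in\Gal(L/\Q)$ to this relation yields $\prod_{\sigma\in g\Phi}\sigma(q^{-1}\Fr_X^2)=1$ in $\Gamma'(X,k)$; since the lattice of multiplicative relations among $R_X'$ is of rank one modulo the obvious relations $\beta\cdot\beta^{-1}=1$ (which come from $\overline{\sigma(q^{-1}\Fr_X^2)}=\sigma(q^{-1}\Fr_X^2)^{-1}$), all these conjugate relations agree up to sign, and — checking that none of them degenerates to a trivial relation — this pins the $\Gal(L/\Q)$-orbit of $\Phi$ down to $\{\Phi,\bar\Phi\}$. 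Hence the reflex field $E^{*}_\Phi=L^{\{g\,:\,g\Phi=\Phi\}}$ is quadratic over $\Q$, and imaginary because complex conjugation sends $\Phi$ to $\bar\Phi\neq\Phi$. Since the reflex field of a CM-type equals that of its primitive core $(E_0,\Phi_0)$, and since for a primitive CM-type the Mumford--Tate torus of a complex abelian variety with that CM-type has dimension $\tfrac12[E_0:\Q]+1$ while being a quotient of $\mathrm{Res}_{E^{*}_\Phi/\Q}\mathbb{G}_m$, which is a torus of dimension $[E^{*}_\Phi:\Q]$, a quadratic reflex field forces $\tfrac12[E_0:\Q]+1\le 2$, i.e. $E_0$ is imaginary quadratic; as $\Phi$ is induced from $(E_0,\Phi_0)$, the field $E_0$ lies inside $E=\End^0(X)$, giving (i). The step I expect to be the real obstacle is the combinatorial verification that the one extra relation truly collapses the orbit of $\Phi$ to $\{\Phi,\bar\Phi\}$ (rather than being partly absorbed into trivial relations for some conjugates); should that resist a clean treatment it can be circumvented by appealing to Noot's constraints \cite{NootCrelle} on the reduction of abelian fourfolds of Mumford's type to exclude the residual configuration.
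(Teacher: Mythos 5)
Your non\-/simple case follows the paper's own route (the rank inequalities force $\rk(X^{(3)})\in\{2,3\}$; Corollary \ref{essen211} in the rank-$3$ subcase; non-neatness plus Theorem \ref{main} in the rank-$2$ subcase), but you skip one point the paper treats explicitly: ``$\rk(X^{(3)})=2<\dim(X^{(3)})$, hence $X^{(3)}$ is not neat'' is not a formal implication. Non-neatness is read off from Examples \ref{nosuper}--\ref{mixed} only after one knows $\deg(\P_{X^{(3)},\min})=6$, so you must exclude the possibility that $\P_{X^{(3)}}(t)$ has multiple roots; the paper does this by noting that otherwise $\P_{X^{(3)},\min}$ would be quadratic, giving $\rk(X^{(3)})=1$, a contradiction. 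This is easy to fill but should be said.

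The simple case is where you genuinely diverge from the paper, and where there is a real gap. The paper disposes of this case in one line by citing Theorem 3.6 of \cite{LenstraZarhin}, which for a simple $X$ with $\rk(X)=\dim(X)-1$ already yields the imaginary quadratic subfield; you instead try to re-derive it. Your orbit computation is sound: the relation lattice among the four inverse-pairs of $R_X'$ has rank one, $L$ is a CM field so complex conjugation is central and $g\Phi$ is again a CM type, whence $g\Phi\in\{\Phi,\bar\Phi\}$ for all $g\in\Gal(L/\Q)$ and the reflex field $E^{*}$ is imaginary quadratic. But the next step, ``for a primitive CM-type the Mumford--Tate torus has dimension $\frac{1}{2}[E_0:\Q]+1$,'' is false: that equality is the definition of a \emph{nondegenerate} CM type, and primitive CM types can be degenerate --- examples occur already for CM fields of degree $8$, i.e., in exactly the dimension at hand, and the whole neat/non-neat circle of ideas turns on the fact that primitivity does not force nondegeneracy. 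As written, your deduction $[E_0:\Q]\le 2$ therefore does not follow. The conclusion can be rescued without any Mumford--Tate input: since $(E_0,\Phi_0)$ is primitive, the reflex of its reflex is $(E_0,\Phi_0)$ itself (Shimura; Lang, Complex Multiplication, Ch.~1), while the reflex field of any CM-type of the imaginary quadratic field $E^{*}$ is $E^{*}$; hence $E_0\cong E^{*}$ is imaginary quadratic and embeds unitally in $E=\End^0(X)$, giving (i). With that repair your argument amounts to re-proving the relevant implication of \cite[Th.~3.6]{LenstraZarhin} rather than quoting it (you also tacitly use that the $\alpha_i$ in 3.6(b) represent the four distinct conjugate pairs, which should be checked against the cited statement). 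The proposed fallback via \cite{NootCrelle} would not close the gap, since Noot's results concern abelian varieties of Mumford's type, not arbitrary CM fourfolds.
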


\begin{proof} We have $$\rk(X)=\dim(X)-1.$$
If $X$ is simple then it follows from Theorem 3.6 of \cite{LenstraZarhin} that
the condition (i) holds.

Now we may assume that $X=X^{(3)}\times X^{(1)}$.  Recall that $\End^0(X^{(1)})$ is an imaginary quadratic field and $\rk(X^{(1)})=1$.
 We have
$$\rk(X^{(3)}) \le \rk(X)=3 \le \rk(X^{(3)})+\rk(X^{(1)})=\rk(X^{(3)})+1.$$
This implies that $\rk(X^{(3)})=2$ or $3$.
If $\rk(X^{(3)})=3$ then all the roots of $\P_{X^{(3)}}(t)$ are simple and therefore $\End^0(X^{(3)})$ is a field (recall that $X^{(3)}$ is simple). It follows from Corollary \ref{essen211} (applied to $X=X^{(3)}$ and $Y=X^{(1)}$) that there is a field embedding $\End^0(X^{(1)})\hookrightarrow \End^0(X^{(3)})$ and one may take as $B$ the  field $\End^0(X^{(1)})$, which implies that the condition (i) holds.
If $\rk(X)=2$ and $\P_{X^{(3)}}(t)$ has no multiple roots (i.e., is irreducible) then $X^{(3)}$ is {\sl not} neat. It follows from Theorem \ref{main} that the condition (ii) holds. The only remaining case is when $\P_{X^{(3)}}(t)$ has multiple roots, i.e., $$\P_{X^{(3)}}(t)=\P_{X^{(3)},\min}(t)^d$$ where $d>1$ is an integer. Since among the roots of $\P_{X^{(3)},\min}$ there are no square roots of $q$, $\deg(\P_{X^{(3)},\min})$ is even. It follows that $d=3$ and $\P_{X^{(3)},\min}$ is a quadratic polynomial.  But then $R_X$ consists of two elements and $\Gamma^{\prime}(X^{(3)},k)$ is a cyclic group, i.e., its rank is $1$, which is not the case. The obtained contradiction ends the proof.
\end{proof}

\section{Corrigendum to \cite{ZarhinEssen}}
\label{corr}

\begin{itemize}
\item

Page 274, Remark 2.1 The displayed formula
 should read
$$\rk(\Gamma)\le\lfloor\deg(\mathcal{P}_{\min})/2\rfloor+1.$$
The formula on last line should read
$\lfloor\deg(\mathcal{P}_{\min})/2\rfloor+1$.

\item
Page 280, Theorem 2.12. The beginning of second sentence

{\sl The  equality
$$\rk(\Gamma(X\times Y))=\rk(X)+\rk(Y)-1$$ holds true if and only if
there exists an imaginary quadratic field $B$  enjoying the following properties:}

should read as follows.

{\sl If
$$\rk(X\times Y)=\rk(X)+\rk(Y)-1$$ then there exists an imaginary quadratic field $B$ that enjoys the following properties.}

\item
Page 281, Remark 3.1, last line. The formula should read
$$\rk(\Gamma)=\lfloor\deg(\mathcal{P}_{\min})/2\rfloor+1.$$

\item
Page 284,  line 8. $\alpha-1$ should read ${\alpha^{\prime}}^{-1}$.
\end{itemize}

\section{Corrigendum to \cite{ZarhinK3}}
\begin{itemize}
\item
Pages 267, 269 (and throughout the text),  $\angle$ and $\angle^{*}$ should read $L$ and $L^{*}$ respectively.
\item
Page 267, line -10: multiplicities should read multiplies.
\item
Page 271, Definition 3.4: ignore senseless {\bf tenibk}.
 \end{itemize}

\end{document}